\newtheorem{theorem}{Theorem}
\newtheorem*{theorem*}{Theorem}
\newtheorem{definition}[theorem]{Definition}
\newtheorem{lemma}[theorem]{Lemma}
\newtheorem{remark}[theorem]{Remark}
\newtheorem*{claim*}{Claim}
\newcommand{\Q}{\mathbb{Q}}
\newcommand{\ie}{{\it{i.$\,$e.\ }}}
\DeclareMathOperator{\sign}{sign} 
\DeclareMathOperator{\s}{s} 
\newcommand{\sing}{\left[\substack{ \\ \oplus}\right]}
\newcommand{\hdoub}{\left[\substack{ \\- \oplus}\right]}
\newcommand{\vdoub}{\left[\substack{ - \\ \oplus}\right]}
\newcommand{\iquad}{\left[\substack{+ - \\- \oplus}\right]}
\newcommand{\tquad}{\ensuremath{\left[\substack{\oplus - \\- +}\right]}}
\newcommand{\nhdoub}{\left[\substack{ \\ \ominus +}\right]}
\newcommand{\nvdoub}{\left[\substack{ \ominus \\ +}\right]}
\title{A Detailed Proof of Pohst's Inequality}
\author{Gabriel Raposo}
\address{Department of Statistics, University of California-Berkeley, \newline 367 Evans Hall, Berkeley, CA 94720, USA}
\email{raposo@berkeley.edu}
\begin{document}
\begin{abstract}

In 1977 Pohst conjectured a certain inequality for $n$ variables and give a computer-assisted proof for $n\leq 10$. We give a proof for all $n$ using a combinatorial argument. This inequality yields a better bound for the regulator in terms of the discriminant for totally real number fields.  

\end{abstract}

	\maketitle

\section{Introduction}

\hspace{15pt} In 1952 Remak \cite{Re} proved that the product $\prod_{1\leq i \leq j \leq n} \big|1-\prod_{k=i}^j x_k\big| $ is bounded above by $(n+1)^{(n+1)/2}$ when the variables $x_i$ are complex numbers with modulus $|x_i|\leq 1$. He used this inequality to obtain a lower bound for the regulator $R_k$ in terms of the discriminant $D_k$ for number fields $k$. 

In 1977 Pohst \cite{Po} proved for $n \leq 10$ that the same product is bounded above by $2^{\lfloor \frac{n+1}{2}\rfloor}$ when the variables are real with absolute value $|x_i|\leq 1$. This improved the regulator lower bound for totally real number fields. He gave a computer-assisted proof by using some elementary inequalities (see Lemma \ref{Pohst}) and factorizing the product for each possible combination of signs for the variables $x_i$. Here we prove it for all $n$.

\begin{theorem*}
Let $v=(x_1,\dots,x_n)\in [-1,1]^n$ and $f_n(v):=\prod_{1\leq i \leq j \leq n} (1-\prod_{k=i}^j x_k)$. Then $f_n(v) \leq 2^{\lfloor \frac{n+1}{2}\rfloor}$, where $\lfloor x \rfloor$ denotes the floor of the real number $x$. Moreover, the maximum is attained if and only if the coordinates of $v$ are $0$ or $-1$ and have the greatest possible number of $-1$'s   without two consecutive $-1$'s. 
\end{theorem*}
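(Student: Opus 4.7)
The plan is to prove the inequality by combining an extremality reduction with induction on $n$. By compactness of $[-1,1]^n$ the polynomial $f_n$ attains its maximum, and two immediate observations restrict the maximizer drastically: if any $x_i=+1$, the singleton factor $1-x_i$ vanishes; and if $x_i=x_{i+1}=-1$, then $1-x_ix_{i+1}=0$. So at a maximizer no coordinate equals $+1$ and no two consecutive coordinates both equal $-1$. These observations already match the shape of the conjectured optimizer and suggest that the true task is to push every coordinate into $\{-1,0\}$.

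The main obstacle is precisely this reduction to a discrete configuration. I would freeze all coordinates but one and study $f_n$ as a polynomial in a single $x_i$ of degree $i(n-i+1)$, whose maximum on $[-1,1]$ one wants to show is attained at $x_i\in\{-1,0\}$. The natural approach is to apply Pohst's elementary inequalities (Lemma \ref{Pohst}) after grouping the factors containing $x_i$ according to the sign pattern of the remaining variables, so that the restricted product factors into sub-products each of which is visibly maximized at a vertex. I expect a careful combinatorial bookkeeping of these sign patterns to be required, and this is almost certainly what the abstract means by the ``combinatorial argument'' that replaces Pohst's case-by-case computer check.

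Once the reduction to $v\in\{-1,0\}^n$ without two consecutive $-1$'s is in hand, the bound follows by a clean induction on $n$ using the factorization
$$f_n(v)=f_{n-1}(x_1,\dots,x_{n-1})\cdot\prod_{i=1}^{n}\bigl(1-x_ix_{i+1}\cdots x_n\bigr).$$
If $x_n=0$, then every factor of the tail product equals $1$, so $f_n(v)=f_{n-1}(x_1,\dots,x_{n-1})\le 2^{\lfloor n/2\rfloor}\le 2^{\lfloor(n+1)/2\rfloor}$. If $x_n=-1$, the no-adjacency hypothesis forces $x_{n-1}=0$, so every tail factor with $i\le n-1$ contains a zero and equals $1$, while the $i=n$ factor equals $2$; moreover, $f_{n-1}(\dots,0)=f_{n-2}(x_1,\dots,x_{n-2})$, giving $f_n(v)=2\,f_{n-2}(x_1,\dots,x_{n-2})\le 2\cdot 2^{\lfloor(n-1)/2\rfloor}=2^{\lfloor(n+1)/2\rfloor}$. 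The equality case then propagates through the induction exactly when the doubling step is taken as often as parity allows, forcing the alternating $-1,0$ pattern described in the theorem.
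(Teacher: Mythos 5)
Your proposal correctly identifies where the difficulty lies but then skips it. The crucial step — getting from an arbitrary $v\in[-1,1]^n$ down to a configuration that Pohst's elementary inequalities handle cleanly — is exactly what you describe as ``a careful combinatorial bookkeeping of these sign patterns'' that you ``expect to be required'' and that is ``almost certainly what the abstract means''. That is not a detail to be deferred: it is the whole content of the paper (all of Section 3, Lemmas~\ref{NewPohst}, \ref{partition}, \ref{Important}, \ref{morethan}, \ref{configuration}, \ref{step2}, \ref{step3}). A proof proposal cannot conjecture its own main step.

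Worse, the specific reduction you sketch is false as stated. You propose to freeze all coordinates but one and show that the restricted polynomial's maximum on $[-1,1]$ is attained at $x_i\in\{-1,0\}$. Take $n=2$ and fix $x_2=-\tfrac12$: then
\begin{equation*}
f_2(x_1,-\tfrac12)=\tfrac32\,(1-x_1)\bigl(1+\tfrac12 x_1\bigr)
\end{equation*}
is maximized on $[-1,1]$ at $x_1=-\tfrac12$ (value $\tfrac{27}{16}$), strictly larger than at $x_1=-1$ or $x_1=0$ (value $\tfrac32$ in both cases). So one-variable-at-a-time coordinate optimization does not drive coordinates to $\{-1,0\}$; at best one could hope that a \emph{global} maximizer lies in $\{-1,0\}^n$, which is a genuinely different and subtler assertion that your argument does not establish.

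The paper's actual route is quite different and avoids any reduction to a finite set. It first proves the bound over the continuous region $[-1,0]^n$ (Lemma~\ref{IdealCase}) by tiling the triangular array of factors $a_{v(i,j)}$ into triples and quadruples to which Pohst's Lemma~\ref{Pohst}\,(2)--(4) applies. It then introduces new sign-flipping inequalities (Lemma~\ref{NewPohst}) and proves, via the ``good partition'' of the non-canonical index set $J_v$ (Lemma~\ref{partition}), the monotonicity estimate $f_n(x_1,\dots,x_n)\le f_n(-|x_1|,\dots,-|x_n|)$ for all $v$ with nonzero entries; zero entries are handled by splitting and induction. Your concluding induction for $v\in\{-1,0\}^n$ with no adjacent $-1$'s is correct, but it treats an even more special case than the paper's Lemma~\ref{IdealCase} and is moot without the missing reduction.
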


\noindent Pohst's motivation  \cite{Po} was to improve on Remak's inequality relating discriminant and regulator for primitive totally real fields  (fields with no subfields other than itself and $\Q$) \cite[(3.15)]{Fr} . Indeed, our theorem improves Remak's
$$
\log|D_k|\le m\log(m)+\sqrt{\gamma_{m-1}(m^3-m)/3}\,(\sqrt{m}R_k)^{1/(m-1)}
$$
to
$$
\log|D_k|\le \lfloor m/2\rfloor\log(4)+\sqrt{\gamma_{m-1}(m^3-m)/3}\,(\sqrt{m}R_k)^{1/(m-1)},
$$
where $m:=[k:\Q])$ and $\gamma_{m-1}$ is Hermite's constant in dimension $m-1$.
 
The idea of our proof of Pohst's inequality is to note that the problem is easily solved when each of the variables $x_i$ is negative. We then show that we can always bound the product $f_n(v)$ by this special case, \ie
$$
f_n(x_1,\dots,x_n) \leq f_n(-|x_1|,\dots,-|x_n|) \qquad\qquad\big((x_1,\dots,x_n)\in[-1,1]^n\big).
$$ To prove this we will use a variation of Pohst's elementary inequalities (Lemma \ref{NewPohst}) that will allow  us to exchange the signs of certain combinations of terms of the product $f_n(v)$. We then show that for any combination of signs for the variables $x_i$ we can find a factorization of $f_n(v)$ to which the new elementary inequalities apply (Lemma $\ref{partition}$).

There is   a proof of Pohst's inequality in \cite{Be}. However, the proof is incomplete so we give a new one here. Battistoni and Molteni \cite{BM} gave a solution to this same problem, using similar methods. This solutions was not known by the author and not yet published at the time of submission of this article.

\section{Proof of Theorem}

\begin{lemma}\label{Pohst}
(Pohst)
\begin{enumerate}
    \item If $a \in [-1,1]$, then $(1-a) \leq 2$.
    \item If $a \in [0,1]$ and $b \in [-1,0]$, then $(1-a)(1-ab) \leq 1$.
    \item If $a,b \in [-1,1]$, then $(1-a)(1-b)(1-ab) \leq 2$.
    \item If $a \in [0,1]$ and $b,c \in [-1,0]$, then $(1-a)(1-ab)(1-ac)(1-abc)\leq 1$.
\end{enumerate}
In  \textit{(3)} the maximum is attained if and only if $(a,b)=(0,-1)$ or  $(a,b)=(-1,0)$.
\end{lemma}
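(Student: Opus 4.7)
The plan is to prove parts (1)--(4) in sequence; only (4) requires real work. Throughout, whenever a variable is confined to $[-1,0]$ I would substitute its absolute value so everything sits in $[0,1]$.

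For (1), $a \geq -1$ is the whole point. For (2), with $\beta := -b \in [0,1]$, expanding gives $(1-a)(1+a\beta) = 1 - a(1-\beta) - a^2\beta \leq 1$, since both subtracted terms are non-negative. For (3), I would split on the signs of $a, b$. If $a,b \geq 0$, every factor lies in $[0,1]$, so the product is $\leq 1 < 2$. If $a,b \leq 0$, setting $\alpha = -a$, $\beta = -b$ and expanding produces
\[
(1+\alpha)(1+\beta)(1-\alpha\beta) = 1 + (\alpha+\beta)(1-\alpha\beta) - (\alpha\beta)^2;
\]
since $\alpha+\beta \leq 1+\alpha\beta$ on $[0,1]^2$ (fix the product and observe that the sum is maximised when one of $\alpha,\beta$ equals $1$), the middle summand is at most $1-(\alpha\beta)^2$, giving a total of $2 - 2(\alpha\beta)^2 \leq 2$. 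For opposite signs I would use (2) on the mixed pair and (1) on the third factor to bound by $1\cdot 2 = 2$. In each subcase, tracing equality pins down the maximisers as $(a,b) \in \{(0,-1),(-1,0)\}$.

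For (4), the plan is to expand $(1-a)(1+a\beta)(1+a\gamma)(1-a\beta\gamma)$ with $\beta = -b$, $\gamma = -c \in [0,1]$ and collect by powers of $a$, obtaining
\[
P = 1 - a(1-\beta)(1-\gamma) - a^2\bigl[\beta(1+\gamma^2)+\gamma(1-\beta)^2\bigr] - a^3\beta\gamma(1-\beta)(1-\gamma) + a^4\beta^2\gamma^2.
\]
The first three correction terms are visibly non-positive on $[0,1]^3$; only $a^4\beta^2\gamma^2$ has the wrong sign, so the task reduces to absorbing it. Since $a,\beta,\gamma \in [0,1]$ we have $a^2\beta\gamma \leq 1$, hence $a^4\beta^2\gamma^2 = (a^2\beta\gamma)^2 \leq a^2\beta\gamma$; AM-GM gives $1+\gamma^2 \geq 2\gamma$, so $a^2\beta\gamma \leq \tfrac12 a^2\beta(1+\gamma^2)$, which is at most half of the negative $a^2$-contribution. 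This forces $P \leq 1$.

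The main obstacle is (4): performing the four-factor expansion without errors and spotting the AM-GM estimate that eliminates the lone sign-positive term. Everything else is routine once the substitutions are in place.
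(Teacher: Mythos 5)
Your proof is correct throughout. The paper itself offers no proof here—it simply refers to Pohst's original paper or ``undergraduate calculus''—so there is no official argument to compare against; you have filled in a genuine gap with a complete, purely algebraic proof that avoids calculus entirely.

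Checking the details: (1) and (2) are immediate as you say. For (3), the case split is clean: the expansion
\[
(1+\alpha)(1+\beta)(1-\alpha\beta)=1+(\alpha+\beta)(1-\alpha\beta)-(\alpha\beta)^2
\]
is correct, and using $(1-\alpha)(1-\beta)\ge 0$ to get $\alpha+\beta\le 1+\alpha\beta$ yields the bound $2-2(\alpha\beta)^2$; equality then forces $\alpha\beta=0$ together with $\alpha+\beta=1$, which gives exactly the two maximisers, and the mixed-sign case is dispatched via (1) and (2). For (4), I verified the expansion
\[
P=1-a(1-\beta)(1-\gamma)-a^2\bigl[\beta(1+\gamma^2)+\gamma(1-\beta)^2\bigr]-a^3\beta\gamma(1-\beta)(1-\gamma)+a^4\beta^2\gamma^2
\]
directly (it checks out numerically as well), and the absorption chain
\[
a^4\beta^2\gamma^2\le a^2\beta\gamma\le\tfrac12 a^2\beta(1+\gamma^2)\le\tfrac12 a^2\bigl[\beta(1+\gamma^2)+\gamma(1-\beta)^2\bigr]
\]
is valid since $a^2\beta\gamma\le 1$ and $1+\gamma^2\ge 2\gamma$. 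This leaves $P\le 1-\tfrac12 a^2[\beta(1+\gamma^2)+\gamma(1-\beta)^2]\le 1$. The only stylistic suggestion: in (4) you might note that the asymmetric grouping $\beta(1+\gamma^2)+\gamma(1-\beta)^2$ (rather than a symmetric one) is what makes the single AM--GM estimate $1+\gamma^2\ge 2\gamma$ suffice; a reader trying to reproduce the expansion by symmetry might land on a different grouping and miss the trick.
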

\begin{proof}
See \cite[p. 468]{Po}, or use undergraduate calculus.
\end{proof}

\begin{lemma}\label{NewPohst}
For $a,b,c \in [0,1]$ the following hold.
\begin{enumerate}
    \item $(1-a)\leq (1+a)$.
    \item $(1-a)(1+ab)\leq(1+a)(1-ab)$.
    \item $(1-a)(1+ab)(1+ac)(1-abc)\leq(1+a)(1-ab)(1-ac)(1+abc)$.
\end{enumerate}
\end{lemma}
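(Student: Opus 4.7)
Part (1) is immediate from $a \in [0,1]$, which gives $-a \le a$. For part (2), I would just expand: the difference $(1+a)(1-ab) - (1-a)(1+ab)$ telescopes (the $1$ and $-a^2b$ terms cancel on both sides) to $2a(1-b) \ge 0$. These two are warm-ups for the main content, which is (3).

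For (3), a brute expansion would produce sixteen terms on each side and the non-negativity of the difference would be opaque. My plan is to group the four factors on each side symmetrically, exploiting the fact that $a$ and $abc$ sit on the ``outer'' signs while $ab$ and $ac$ sit on the ``inner'' signs. Concretely, set
$$
A := (1+a)(1+abc), \quad B := (1-a)(1-abc), \quad C := (1-ab)(1-ac), \quad D := (1+ab)(1+ac),
$$
so that (3) reads $AC \ge BD$. A short computation should give $A + B = C + D = 2 + 2a^2bc$, together with $A - B = 2a(1+bc)$ and $D - C = 2a(b+c)$. Plugging these into the identity $2(AC - BD) = (A+B)(C-D) + (A-B)(C+D)$ causes the common factor $2 + 2a^2bc$ to pull out of both terms, and the remaining bracket collapses to $(1+bc) - (b+c) = (1-b)(1-c)$. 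I expect to end up with
$$
AC - BD = 2a\,(1 + a^2bc)\,(1-b)(1-c),
$$
which is manifestly non-negative on $[0,1]^3$.

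The main obstacle is spotting the right pairing in (3): the choice of $\{A,B\}$ as the ``outer'' block and $\{C,D\}$ as the ``inner'' block is precisely what causes the telescoping factor $(1-b)(1-c)$ to emerge. Once this grouping and the sum/difference identity are in place, the verification is routine algebra; without them, one faces an unilluminating polynomial inequality in three variables that doesn't visibly factor.
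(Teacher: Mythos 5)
Your proposal is correct. Parts (1) and (2) are verified exactly as you say, and for (3) I checked the identity and the algebra: with $A=(1+a)(1+abc)$, $B=(1-a)(1-abc)$, $C=(1-ab)(1-ac)$, $D=(1+ab)(1+ac)$ one indeed has $A+B=C+D=2+2a^2bc$, $A-B=2a(1+bc)$, $D-C=2a(b+c)$, and the identity $2(AC-BD)=(A+B)(C-D)+(A-B)(C+D)$ yields $AC-BD=2a(1+a^2bc)(1-b)(1-c)\ge0$.

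Your route differs from the paper's, though the two share the same crucial ingredient. The paper observes that $b+c\le 1+bc$ (equivalently $(1-b)(1-c)\ge0$) gives the two separate inequalities $D=(1+ab)(1+ac)\le(1+a)(1+abc)=A$ and $B=(1-a)(1-abc)\le(1-ab)(1-ac)=C$, and then simply multiplies these two nonnegative inequalities to get $BD\le AC$. You instead compute the difference $AC-BD$ in closed form via the polarization-type identity, producing the explicit factorization $2a(1+a^2bc)(1-b)(1-c)$. Your version is slightly more work but yields an explicit formula for the gap; the paper's version is shorter and exposes the inequality as a product of two simpler ones. Both hinge on $(1+bc)-(b+c)=(1-b)(1-c)$, so the proofs are cousins rather than strangers.
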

\begin{proof}
For (2)  note that $(1-a)\leq(1-ab)$ and $(1+ab)\leq(1+a)$. For (3) note that, since $x(1-y)\leq 1-y$ for $x\leq 1$ and $y \leq 1$, then $x+y\leq1+xy$. From this we get
\begin{equation*}
(1+ab)(1+ac )  =  1+a(b+c)+a^2bc \leq   1+a(1+bc)+a^2bc 
 = (1+a)(1+abc),
\end{equation*}
which we multiply with the following inequality,
\begin{equation*}
(1-a)(1-abc)   =  1-a(1+bc)+a^2bc 
  \leq   1-a(b+c)+a^2bc 
  =   (1-ab)(1-ac).\qedhere
\end{equation*}
\end{proof}

Let $v=(x_1,\dots,x_n)\in [-1,1]^n$ and $a_{v(i,j)}:= 1-\prod_{k=i}^j x_k$. Note that the terms $a_{v(i,j)}$ can be easily ordered in a triangular way, as seen in figures \ref{figure1}, \ref{figure3} and \ref{figure4}. Next we treat the case where $x_k \in [-1,0]$ for all $k$.

\begin{lemma}\label{IdealCase}
If $v=(x_1,\dots,x_n)\in [-1,0]^n$, then $f_n(v) \leq 2^{\lfloor \frac{n+1}{2}\rfloor}$.  Moreover, the maximum is attained if and only if the coordinates of $v$ are $0$ or $-1$ and $v$ has the greatest possible number of $-1$'s without two consecutive $-1$'s.
\end{lemma}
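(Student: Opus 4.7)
The plan is to exploit a natural block decomposition of $\{1,\dots,n\}$ and bound each piece with Lemma~\ref{Pohst}. Partition $\{1,\dots,n\}$ into consecutive blocks $B_k = \{2k-1,2k\}$ for $k = 1,\dots,\lfloor n/2\rfloor$, together with a singleton $B_{(n+1)/2} = \{n\}$ if $n$ is odd; let $m = \lceil n/2\rceil = \lfloor (n+1)/2\rfloor$ denote the number of blocks. Every interval $[i,j]$ is then either \emph{within-block} (both endpoints in some $B_k$) or \emph{between-block} (left endpoint in $B_k$, right endpoint in $B_\ell$ with $k<\ell$), so
\[
f_n(v) \;=\; \prod_{k=1}^{m} W_k \;\cdot\; \prod_{1\le k<\ell\le m} D_{k,\ell},
\]
where $W_k$ collects the within-block factors and $D_{k,\ell}$ the factors bridging $B_k$ to $B_\ell$.

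For each size-$2$ block, $W_k = (1-x_{2k-1})(1-x_{2k})(1-x_{2k-1}x_{2k}) \le 2$ by Lemma~\ref{Pohst}(3), and when $n$ is odd the singleton block contributes $W_m = 1-x_n \le 2$ by part~(1). For a bridge between two size-$2$ blocks I would set $a = \prod_{r=2k}^{2\ell-1} x_r$, $b = x_{2k-1}$, $c = x_{2\ell}$: the four intervals with $i\in B_k$, $j\in B_\ell$ are then precisely the factors $(1-a),(1-ab),(1-ac),(1-abc)$. Since $a$ is a product of an even number of elements of $[-1,0]$ it lies in $[0,1]$, and $b,c\in [-1,0]$, so Lemma~\ref{Pohst}(4) yields $D_{k,\ell}\le 1$. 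An analogous choice $a = \prod_{r=2k}^{n} x_r$, $b = x_{2k-1}$ reduces a bridge to the singleton $B_\ell = \{n\}$ to Lemma~\ref{Pohst}(2). Multiplying over all blocks and bridges gives $f_n(v) \le 2^m = 2^{\lfloor (n+1)/2\rfloor}$.

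For the equality claim I trace the tightness conditions through each invocation. Parts~(1) and~(3) are tight only when each block contains exactly one coordinate equal to $-1$ with the other zero, producing $\lceil n/2\rceil$ minus ones in total; parts~(2) and~(4) are tight only when the chosen $a$ vanishes, i.e., some coordinate in the bridging range is $0$. If this last condition were to fail while the block-equalities hold, two $-1$'s would land at adjacent positions $i,i+1$ across a block boundary, but then the length-$2$ factor $1-x_ix_{i+1}=0$ already collapses $f_n(v)$ to $0$; so no new extremal configurations arise, and the characterisation in the lemma follows. The main piece of real bookkeeping will be the parity and sign check on each bridge, which is routine; after that the proof is a direct assembly of parts~(1)--(4) of Lemma~\ref{Pohst}.
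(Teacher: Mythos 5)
Your block decomposition is exactly the paper's factorization: the within‑block products $W_k$ are the ``base of the triangle'' terms (handled by Lemma~\ref{Pohst}(1),(3)), and your bridges $D_{k,\ell}$ are precisely the rectangles (handled by Lemma~\ref{Pohst}(2),(4)); the paper merely phrases the odd case recursively via $f_n(v)=f_{n-1}(\cdots)\,a_{v(n,n)}\prod_k a_{v(2k,n)}a_{v(2k-1,n)}$ rather than treating $\{n\}$ as a singleton block. The equality analysis also matches the paper's (block tightness forces one $-1$ per block, adjacent $-1$'s force $f_n=0$), so this is the same proof in a slightly different notation.
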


\begin{proof}
We find a good factorization for the product $f_n(v)=\prod_{1 \leq i\leq j \leq n} a_{v(i,j)}$ by means of lemma \ref{Pohst}. Figure \ref{figure1} shows an illustrative example of how the partition is done. The case $n=1$ is obvious. If $n$ is even then
\begin{multline*}
    f_n(v)=\big( \prod_{k=1}^{n/2} a_{v(2k-1,2k-1)}a_{v(2k,2k)} a_{v(2k-1,2k)} \big) \cdot \\ \big( \prod_{\substack{1 \leq k < n/2\\ 0 \leq j < n/2-k}} a_{v(2k,2k+2j+1)} a_{v(2k-1,2k+2j+1)}  a_{v(2k,2k+2j+2)} a_{v(2k-1,2k+2j+2)} \big).
\end{multline*}
The first product corresponds to the terms at the base of the triangle while the second product corresponds to the rectangles in the rest of the triangle.  Using lemma \ref{Pohst} \textit{(3)} for the first product and lemma \ref{Pohst} \textit{(4)} for the second one we conclude the even case. If $n\geq 3$ is odd, note that
\begin{center}
    $f_n(v)=f_{n-1}(x_1,\dots,x_{n-1}) a_{v(n,n)} \prod_{k=1}^{(n-1)/2} a_{v(2k,n)}a_{v(2k-1,n)}$
\end{center}
and use lemma \ref{Pohst} \textit{(2)} in the last product. 

\begin{figure}[h!] 
    \centering
    \begin{tikzpicture}
        \matrix [matrix of math nodes] (m)
        {
            a_{v(1,n)} &a_{v(2,n)} &a_{v(3,n)} &a_{v(4,n)} &a_{v(5,n)} &a_{v(6,n)} &\hdots  &a_{v(n,n)}  \\
            \vdots& \vdots &\vdots &\vdots &\vdots &\vdots & \iddots & \\
            a_{v(1,6)} &a_{v(2,6)} &a_{v(3,6)} &a_{v(4,6)} &a_{v(5,6)} &a_{v(6,6)} & &  \\    
            a_{v(1,5)} &a_{v(2,5)} &a_{v(3,5)} &a_{v(4,5)} &a_{v(5,5)} & & & \\      
            a_{v(1,4)} &a_{v(2,4)} &a_{v(3,4)} &a_{v(4,4)} & & & & \\     
            a_{v(1,3)} &a_{v(2,3)} &a_{v(3,3)} & & & & & \\    
            a_{v(1,2)} &a_{v(2,2)} & & & & & & \\     
            a_{v(1,1)} & & & & & & & \\      
        };
        \draw[color=red] (m-1-1.south west) rectangle (m-1-2.north east);
        \draw[color=red] (m-1-3.south west) rectangle (m-1-4.north east);
        \draw[color=red] (m-1-5.south west) rectangle (m-1-6.north east);
        \draw[color=red] (m-1-8.south west) rectangle (m-1-8.north east);
        \draw[color=red] (m-4-1.south west) rectangle (m-3-2.north east);
        \draw[color=red] (m-4-3.south west) rectangle (m-3-4.north east);
        \draw[color=red] (m-6-1.south west) rectangle (m-5-2.north east);
        \draw[color=red] (m-8-1.south west) -- (m-8-1.south east) -- (m-7-2.south east) -- (m-7-2.north east) -- (m-7-1.north west) -- (m-8-1.south west);
        \draw[color=red] (m-6-3.south west) -- (m-6-3.south east) -- (m-5-4.south east) -- (m-5-4.north east) -- (m-5-3.north west) -- (m-6-3.south west);
        \draw[color=red] (m-4-5.south west) -- (m-4-5.south east) -- (m-3-6.south east) -- (m-3-6.north east) -- (m-3-5.north west) -- (m-4-5.south west);
\end{tikzpicture}
    \caption{Factorization of $f_n(v)$ when $n$ is odd.}
    \label{figure1}
\end{figure}

A necessary condition for the maximum to be attained is that each of the terms in the base of the triangle equals 2, i.e. $a_{v(2k-1,2k-1)}a_{v(2k,2k)} a_{v(2k-1,2k)}=2$, and $a_{v(n,n)}=2$ if $n$ is odd. Using the last statement in lemma \ref{Pohst}, this happens exactly when $x_{2k-1}=0$ and $x_{2k}=-1$ or $x_{2k-1}=-1$, and $x_{2k}=0$. However, we cannot have $x_{t}=x_{t+1}=-1$ for $t=1,\dots,n-1$ since this implies that $a_{v(t,t+1)}=0$ and then $f_n(v)=0$. If the coordinates of $v$ are $0$ or $-1$ and have the greatest number of $-1$'s but without two consecutive $-1$'s, then each each of the factors in the base of the triangle equal 2. Since for two consecutive coordinates of $v$ one of them is $0$, each factor $a_{v(i,j)}$ with $i\neq j$ is equal to 1. In consequence the product of the factors in the rectangles equals 1.
\end{proof}

\begin{definition}
Let $v=(x_1,\dots,x_n)\in \big([-1,1]\backslash\{0\}\big)^n$. Define the product sign   $\s$ of   $a_{v(i,j)}$ by $\s(a_{v(i,j)}):= \sign(1-a_{v(i,j)}) = \prod_{k=i}^j \sign(x_{k})$. We say that the term $a_{v(i,j)}=1-\prod_{k=i}^j x_k$ is non canonical if $\s(a_{v(i,j)})=(-1)^{i+j}$. We also define the set $J_v$ of non canonical indices of $v$, $J_v:=\{(i,j) : \s(a_{v(i,j)}) = (-1)^{i+j}\}$.
\end{definition}
\begin{remark}\label{Notethis} Note that if we let $-|v|:=(-|x_1|,\dots,-|x_n|)\in [-1,0)^n$ and if $a_{v(i,j)}$ is canonical, then $a_{v(i,j)}=a_{-|v|(i,j)}$ and $s(a_{-|v|(i,j)})=s(a_{v(i,j)})=(-1)^{i+j+1}$.
\end{remark}

We will partition  $J_v$ into  subsets of 1, 2 or 4 elements so that we can apply to the corresponding products  cases (1), (2) or (3) respectively   of lemma \ref{NewPohst}. For this we need the following definition. 

\begin{definition} \label{defpartition}
We say that $J_v$ has a good partition if there is a partition $\pi_v$ of $J_v$ such that if $p \in \pi_v$ then  one of the following holds.

\noindent (1) $p=\{(i,j)\}$ with $\s(a_{v(i,j)})=1$.

\noindent (2) $p=\{(i,j),(i',j')\}, \ \s(a_{v(i,j)})=1, \ \s(a_{v(i',j')})=-1$. Also,   $i'\leq i$ and $j=j'$, or $i'=i$ and $j\leq j'$.

\noindent (3) $p=\{(i,j),(i-l,j),(i,j+l'),(i-l,j+l')\}, \ \s(a_{v(i,j)})=\s(a_{v(i-l,j+l')}) =1 , \ \s(a_{v(i-l,j)})=\s(a_{v(i,j+l')})=-1$, and $l,l'\geq 1$.
\end{definition}
 
Suppose $p$ is as in case (3). Then
\begin{align*}
\prod_{(i,j) \in p} a_{v(i,j)} & =   (1-\prod_{k=i}^j x_k)(1-\prod_{k=i-l}^j x_k)(1-\prod_{k=i}^{j+l'} x_k)(1-\prod_{k=i-l}^{j+l'} x_k) \\
& =   (1-\prod_{k=i}^j |x_k|)(1+\prod_{k=i-l}^j |x_k|)(1+\prod_{k=i}^{j+l'} |x_k|)(1-\prod_{k=i-l}^{j+l'} |x_k|) \\
& \leq   (1+\prod_{k=i}^j |x_k|)(1-\prod_{k=i-l}^j |x_k|)(1-\prod_{k=i}^{j+l'} |x_k|)(1+\prod_{k=i-l}^{j+l'} |x_k|)\ \ \\\
& =   \prod_{(i,j) \in p} a_{-|v|(i,j)},
\end{align*}
where the inequality follows from  lemma  \ref{NewPohst} (3) on setting $ 
a:=\prod_{k=i}^j x_k,$  $b:=\prod_{k=i-l}^{i-1} x_k$ and $c:=\prod_{k=j+1}^{j+l'}  x_k.$
The other cases are analogous, as we now record.
\begin{remark}\label{partition1} For $p$ as in definition \ref{defpartition}, we have  $\prod_{(i,j) \in p} a_{v(i,j)} \leq \prod_{(i,j) \in p} a_{-|v|(i,j)}$. 
\end{remark}

The following lemma will be central in our proof. However we will postpone its proof to the next section.

\begin{lemma}\label{partition}
For every $v=(x_1,\dots,x_n) \in \big([-1,1]\backslash\{0\}\big)^n$ the set $J_v$ has a good partition.
\end{lemma}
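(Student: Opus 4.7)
The plan is to prove Lemma~\ref{partition} by strong induction on $n$. The main reformulation is that, setting $\tau_k := \#\{\ell \le k : x_\ell > 0\} \bmod 2$ with $\tau_0 = 0$, one has $(i,j) \in J_v$ if and only if $\tau_{i-1} \ne \tau_j$, and on $J_v$ the sign $s(a_{v(i,j)})$ equals $(-1)^{i+j}$; thus $J_v$ together with its sign function is determined by the binary sequence $\tau$. The base cases $n \le 1$ are immediate, since $J_v$ is then either empty or the singleton $\{(1,1)\}$ of sign $+1$, handled by case~(1) of Definition~\ref{defpartition}.

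For the inductive step, I would let $v' = (x_1, \dots, x_{n-1})$, invoke the inductive hypothesis to obtain a good partition $\pi_{v'}$ of $J_{v'}$, and extend it to a good partition of $J_v$ by incorporating the new indices $(i, n) \in J_v$. If $x_n < 0$ then $\tau_n = \tau_{n-1}$, so $(i, n) \in J_v$ if and only if $(i, n-1) \in J_{v'}$ for $i < n$, with opposite signs, and $(n, n) \notin J_v$. In that case each part $p$ of $\pi_{v'}$ that meets column $n-1$ is extended by adjoining the mirror of its column-$(n-1)$ elements: a case~(1) singleton $\{(i, n-1)\}$ is promoted to the case~(2b) pair $\{(i, n-1), (i, n)\}$; a case~(2a) pair $\{(i', n-1), (i, n-1)\}$ is promoted to the case~(3) rectangle $\{(i', n-1), (i, n-1), (i', n), (i, n)\}$; a case~(2b) pair $\{(i, j), (i, n-1)\}$ with $j < n-1$ is left intact while the mirror $(i, n)$ becomes a new case~(1) singleton; and a case~(3) rectangle of $\pi_{v'}$ whose right column is $n-1$ is left intact while its two mirrors form a new case~(2a) pair. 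The sign-check for each promotion follows from the opposite-sign relation between $(i, n)$ and $(i, n-1)$.

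The harder subcase is $x_n > 0$, where $(n,n) \in J_v$ has sign $+1$ and the new column-$n$ indices are the \emph{complement} of column $n-1$ of $J_{v'}$ rather than mirrors. Consecutive $-/+$ siblings within column $n$ can often be paired via case~(2a) with leftover $+$'s as case~(1) singletons, leaving $\pi_{v'}$ unchanged; I expect the genuine obstacle to arise when column $n$ contains strictly more $-$ than $+$ indices, in which case any good partition must include case~(3) rectangles whose fourth corner lies in an earlier column, forcing nontrivial modifications to $\pi_{v'}$. Making this systematic is the heart of the proof: I would address it either by strengthening the inductive hypothesis to impose a canonical form on $\pi_{v'}$ near its rightmost column (ensuring such rectangles can always be produced), or, perhaps more cleanly, by abandoning induction in favour of an explicit construction of $\pi_v$ in terms of the run structure of $\tau$ which exhibits the required case~(3) rectangles directly.
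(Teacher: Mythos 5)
Your reformulation via the parity sequence $\tau$ is correct, and the case $x_n<0$ is handled soundly: column $n$ of $J_v$ is exactly the mirror, with opposite signs, of column $n-1$ of $J_{v'}$, and the four promotions you describe (singleton $\to$ vertical doubleton, horizontal doubleton $\to$ rectangle, vertical doubleton fixed $+$ new singleton, rectangle fixed $+$ new horizontal doubleton) do yield a good partition of $J_v$ from one of $J_{v'}$. But you then explicitly punt on the case $x_n>0$, and that case is the entire content of the lemma. When $x_n>0$, column $n$ of $J_v$ is the \emph{complement} of column $n-1$ of $J_{v'}$, and as you yourself observe it can carry strictly more negative than positive non-canonical pairs; pairing within the column then fails, and one must splice new pairs $(i,n)$ into parts of $\pi_{v'}$ that reach back into earlier columns. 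Nothing in the plain inductive hypothesis ``$J_{v'}$ has a good partition'' guarantees that $\pi_{v'}$ is shaped so as to admit such splicing, and you do not supply either the strengthened invariant or the direct construction that you gesture at. A good partition is far from unique, and a carelessly chosen $\pi_{v'}$ really can block the extension, so this is not a routine omission.

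For comparison, the paper avoids induction on $n$ altogether: it builds $\pi_v$ directly, sweeping through the negative non-canonical pairs in a fixed total order $\prec$ and at each step applying one of two local operations (singleton $\to$ doubleton, or doubleton $+$ singleton $\to$ rectangle). The guarantee that at least one operation always applies rests on a parity-balance count (Lemma~\ref{Important}, itself proved by a small induction), plus a catalogue of configurations that the sweep order makes impossible (Remark~\ref{notpossible}), culminating in Lemmas~\ref{step2} and~\ref{step3}. That machinery is precisely what replaces the missing step in your proposal: it is, in effect, the ``canonical form near the rightmost column'' that you would need to impose on $\pi_{v'}$, but built into the construction from the start rather than carried as an inductive invariant. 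Until you either prove such an invariant or exhibit the explicit $\tau$-based construction you mention, the proposal does not establish the lemma.
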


We can conclude the proof of the theorem by induction on $n$. If $n=1$, we obviously have $f_1(v)\leq 2$. If $n \geq 2$, let $v:=(x_1,\dots,x_n) \in [-1,1]^{n}$. If some of the coordinates $x_i$ is 0, then we note that for $v_1:=(x_1,\dots,x_{i-1})$ and $v_2:=(x_{i+1},\dots,x_{n})$ by induction we obtain
\begin{align*}
f_n(v) =  f_{i-1}(v_1) f_{n-i}(v_2)  \leq  2^{\lfloor \frac{(i-1)+1}{2}\rfloor} 2^{\lfloor \frac{(n-i)+1}{2}\rfloor} \leq  2^{\lfloor \frac{n+1}{2}\rfloor}.
\end{align*}
If $v=(x_1,\dots,x_n) \in \big([-1,1]\backslash\{0\}\big)^n$, we separate the product into the canonical elements and the non canonical elements, and we factor the non canonical elements using a partition obtained from lemma \ref{partition}. Thus, 
\begin{align*}
    f_n(x_1,\dots,x_n) & :=    \prod_{1\leq i \leq j \leq n} a_{v(i,j)} 
     =  \Big( \prod_{(i,j) \not \in J_v} a_{v(i,j)} \Big)\cdot  \Big( \prod_{(i,j)\in J_v} a_{v(i,j)} \Big) \\
    & =   \Big( \prod_{(i,j) \not \in J_v} a_{-|v|(i,j)} \Big) \cdot \Big( \prod_{p \in \pi_v} \prod_{(i,j) \in p} a_{v(i,j)} \Big) \qquad\quad(\text{see  remark }\ref{Notethis}) \\
    & \leq    \Big( \prod_{(i,j) \not \in J_v} a_{-|v|(i,j)} \Big) \cdot \Big( \prod_{p \in \pi_v} \prod_{(i,j) \in p} a_{-|v|(i,j)} \Big)\quad \quad(\text{see  remark }\ref{partition1} )\\
    & =   f_n(-|x_1|,\dots,-|x_n|)
\leq    2^{\lfloor \frac{n+1}{2}\rfloor} \qquad\qquad\quad\quad(\text{see  lemma }\ref{IdealCase}).
\end{align*}
As for the final claim in the theorem, we use lemma \ref{NewPohst} for each $p \in \pi_v$ and conclude with lemma \ref{IdealCase}.

\section{Proof of Lemma \ref{partition}}

The following remark, whose proof is a straightforward calculation, will be very useful.

\begin{remark}\label{signs} If in the set $\{(i,j),(i-l,j),(i,j+l'),(i-l,j+l')\}$, where $l,l'\geq 1$, three of the pairs are non canonical, then the last one is also non canonical. Moreover we have the relation $\s(a_{v(i,j)})\s(a_{v(i-l,j+l')})=s(a_{v(i-l,j)})\s(a_{v(i,j+l')})$.
\end{remark}

\begin{lemma}\label{Important}
If $v=(x_1,\dots,x_n) \in \big([-1,1]\backslash\{0\}\big)^n$, $n$ is even and if the number of $x_i$ with $\sign(x_i)=-1$ is odd, then $b_{1}(v)=b_{-1}(v)$, where $$ b_{l}(v):=|\{(i,j) \in J_v : \s(a_{v(i,j)})=l, i=1 \text{ or } j=n \}|.$$
\end{lemma}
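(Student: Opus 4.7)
The idea is to encode the set $J_v$ via a single $\pm 1$-valued sequence and then do a parity count. Writing $\sigma_k := \sign(x_k)$ and $T_k := \sigma_1\cdots\sigma_k$ (with $T_0 := 1$), we have $\s(a_{v(i,j)}) = T_{i-1}T_j$, and $T_n = -1$ because $n$ is even while the number of negative $x_i$ is odd. Setting $U_k := (-1)^k T_k$, the non-canonicity condition $T_{i-1}T_j = (-1)^{i+j}$ simplifies to $U_{i-1}\ne U_j$; moreover $U_0 = 1$ and $U_n = -1$.

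With this normalization the boundary of the triangle decouples cleanly. For $(1,j)\in J_v$ one needs $U_j = -1$, and $\s(a_{v(1,j)})=(-1)^{1+j}$, so the sign is $+1$ iff $j$ is odd. For $(i,n)\in J_v$ one needs $U_{i-1} = +1$, and $\s(a_{v(i,n)})=(-1)^{i+n}=(-1)^i$ (since $n$ is even), so the sign is $+1$ iff $i$ is even. The corner $(1,n)$ always lies in $J_v$ (as $U_0\ne U_n$) and has sign $-1$, so it will be double-counted only toward $b_{-1}$.

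For $b_1$, the column contributes the number of odd $j\in\{1,3,\dots,n-1\}$ with $U_j=-1$, and the row (after substituting $k=i-1$) contributes the number of odd $k\in\{1,3,\dots,n-1\}$ with $U_k=+1$. These are complementary conditions on the same set of $n/2$ indices, giving $b_1 = n/2$. For $b_{-1}$, the column contributes $\#\{k\in\{2,4,\dots,n\}:U_k=-1\}$ and the row contributes $\#\{k\in\{0,2,\dots,n-2\}:U_k=+1\}$. Because $U_0=+1$ and $U_n=-1$, neither count changes if one enlarges its index set to $E:=\{0,2,\dots,n\}$, so the total is $|E|=n/2+1$; subtracting $1$ for the double-counted corner yields $b_{-1}=n/2=b_1$. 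The only real difficulty is parity bookkeeping at the corner $(1,n)$; everything else is immediate after the substitution $U_k=(-1)^k T_k$.
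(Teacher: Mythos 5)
Your proof is correct, and it takes a genuinely different route from the paper's. The paper proves Lemma~\ref{Important} by induction on $n$: after disposing of the purely alternating sign patterns directly, it shows that inserting two equal-sign coordinates $e_1,e_2$ anywhere into a vector of even length preserves the equality $b_1=b_{-1}$, by locally comparing the four affected boundary terms $a_{v'(1,k+1)}$, $a_{v'(1,k+2)}$, $a_{v'(k+1,n+2)}$, $a_{v'(k+2,n+2)}$. You instead work with the partial sign products $T_k=\prod_{r\le k}\sign(x_r)$ and their normalization $U_k=(-1)^kT_k$, observe that $(i,j)\in J_v$ iff $U_{i-1}\neq U_j$ while the sign of any pair already in $J_v$ is just $(-1)^{i+j}$, and then count the $i=1$ column and $j=n$ row directly. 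This collapses the lemma into a single pass over the sequence $U_0,\dots,U_n$: for $b_1$ the column and row contributions are complementary conditions over the odd indices $\{1,3,\dots,n-1\}$, while for $b_{-1}$ both contributions can harmlessly be extended to the even indices $\{0,2,\dots,n\}$ (since $U_0=1$ and $U_n=-1$), so the only bookkeeping is the double-counted corner $(1,n)$. The substitution $U_k=(-1)^kT_k$ is exactly what eliminates the casework and the induction, and it yields the sharper conclusion $b_1(v)=b_{-1}(v)=n/2$. What the paper's inductive proof buys is uniformity with the rest of its argument --- the ``insert $e_1,e_2$'' picture reappears when building the good partition --- but as a self-contained proof of this lemma yours is shorter and more transparent.
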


\begin{proof}
Note that we just need to inspect the vertical and horizontal edges of the triangle in   Figure \ref{figure3}. We proceed by induction on $n$, the case $n=2$ being trivial. If  the sequence of signs of the $x_k$ is $(-,+,-,+,\ldots,-,+)$ or $ (+,-,+,-,\ldots,+,-)$, in which case necessarily $n\equiv2$ (mod 4), the lemma is a straight-forward calculation that does not even require the inductive assumption. If the $x_k$ do not have this sign pattern, then the sequence of signs   must necessarily contain  consecutive $++$ or $--$.  The inductive step amounts to the following 
\vskip0.3cm
\noindent {\it{Claim}}. If $b_{1}(v)=b_{-1}(v)$ for some $v \in \big([-1,1]\backslash\{0\}\big)^n$, then $b_{1}(v')=b_{-1}(v')$ for $v':=(x_1,\dots,x_k,e_1$ $,e_2,x_{k+1},\dots,x_n) \in \big([-1,1]\backslash\{0\}\big)^{n+2}$ where $\sign(e_1)=\sign(e_2)$ and $0 \leq k \leq n$.
\begin{figure}[h!] 
    \centering
    \begin{tikzpicture}
        \matrix [matrix of math nodes] (m)
        {
            a_{v'(1,n+2)} &a_{v'(2,n+2)} &\hdots &a_{v'(k,n+2)} &a_{v'(k+1,n+2)} &a_{v'(k+2,n+2)} &\hdots  &a_{v'(n+2,n+2)}  \\
            \vdots& \vdots & &\vdots  &\vdots &\vdots & \iddots & \\
            a_{v'(1,k+2)} &a_{v'(2,k+2)} &\hdots &a_{v'(k,k+2)} &a_{v'(k+1,k+2)} & e_2 & &  \\    
            a_{v'(1,k+1)} &a_{v'(2,k+1)} &\hdots &a_{v'(k,k+1)} & e_1 & & & \\      
            a_{v'(1,k)} &a_{v'(2,k)} &\hdots &a_{v'(k,k)} & & & & \\     
            \vdots &\vdots &\iddots & & & & & \\    
            a_{v'(1,2)} &a_{v'(2,2)} & & & & & & \\     
            a_{v'(1,1)} & & & & & & & \\      
        };  
        \draw[color=red] (m-4-1.south west) rectangle (m-3-1.north east);
        \draw[color=red] (m-1-5.south west) rectangle (m-1-6.north east);
\end{tikzpicture}
    \caption{Elements for which we must inspect the product sign.}
    \label{figure3}
\end{figure}

To prove this, note that if $j\leq k$ then $a_{v'(i,j)}=a_{v(i,j)}$. Also, if $i \geq k+3$ then $a_{v'(i,j)}=a_{v(i-2,j-2)}$. Thus in both cases $\s(a_{v'(i,j)})=\s(a_{v(i,j)})$ and $\s(a_{v'(i,j)})=\s(a_{v(i-2,j-2)})$. Moreover, if $i \leq k$ and $j \geq k+3$, then $\s(a_{v'(i,j)})= \s(a_{v(i,j-2)})$. So we just need to inspect the elements $a_{v'(1,k+1)}$, $a_{v'(1,k+2)}$, $a_{v'(k+1,n+2)}$ and $a_{v'(k+2,n+2)}$.

If $\sign(e_1)=1$, then $\s(a_{v'(1,k+1)})= \s(a_{v'(1,k+2)})$ and therefore $a_{v'(1,k+1)} \in J_v$ if and only if $a_{v'(1,k+2)} \not \in J_v$. The same statement holds for $a_{v'(k+1,n+2)}$ and $a_{v'(k+2,n+2)}$. Also, since the number of $\{x_i\}_{i=1}^n$ with $\sign(x_i)=-1$ is odd, then $\sign(\prod_{i=1}^k x_i)=-\sign(\prod_{i=k+1}^{n} x_i)$. This implies that $\s(a_{v'(1,k+1)}) \neq s(a_{v'(k+1,n+2)})$, so we have $b_{1}(v')=b_{-1}(v')$. 

If $\sign(e_1)=-1$, then $\s(a_{v'(1,k+1)}) \neq  \s(a_{v'(1,k+2)})$ and therefore $a_{v'(1,k+1)} \in J_v$ if and only if $a_{v'(1,k+2)} \in J_v$. The same statement holds for $a_{v'(k+1,n+2)}$ and $a_{v'(k+2,n+2)}$. So we have $b_{1}(v')=b_{-1}(v')$.
\end{proof}

We will construct the partition of $J_v$ inductively. Let us define a total order over $\{(i,j): i\leq j\}$ as in Figure \ref{figure4}, $(i,j) \prec (i',j')$ if and only if $j' > j$, or $j'=j$ and $i' < i$. 
\begin{figure}[h] 
    \centering
    \begin{tikzpicture}
    \matrix [matrix of math nodes] (m)
        {
            (1,n) &(2,n) &(3,n) &(4,n) &(5,n) &(6,n) &\hdots  &(n,n)  \\
            \vdots& \vdots &\vdots &\vdots &\vdots &\vdots & \iddots & \\
            (1,6) &(2,6) &(3,6) &(4,6) &(5,6) &(6,6) & &  \\    
            (1,5) &(2,5) &(3,5) &(4,5) &(5,5) & & & \\      
            (1,4) &(2,4) &(3,4) &(4,4) & & & & \\     
            (1,3) &(2,3) &(3,3) & & & & & \\    
            (1,2) &(2,2) & & & & & & \\     
            (1,1)\\      
        };  
       \draw[color=red][->] (m-7-2) edge (m-7-1);
       \draw[color=red][->]  (m-6-3) edge (m-6-2);
       \draw[color=red][->]  (m-6-2) edge (m-6-1);
       \draw[color=red][->]  (m-5-4) edge (m-5-3);
       \draw[color=red][->]  (m-5-3) edge (m-5-2);
       \draw[color=red][->]  (m-5-2) edge (m-5-1);
       \draw[color=red][->]  (m-4-5) edge (m-4-4);
       \draw[color=red][->]  (m-4-4) edge (m-4-3);
       \draw[color=red][->]  (m-4-3) edge (m-4-2);
       \draw[color=red][->]  (m-4-2) edge (m-4-1);
       \draw[color=red][->]  (m-3-6) edge (m-3-5);
       \draw[color=red][->]  (m-3-5) edge (m-3-4);
       \draw[color=red][->]  (m-3-4) edge (m-3-3);
       \draw[color=red][->]  (m-3-3) edge (m-3-2);
       \draw[color=red][->]  (m-3-2) edge (m-3-1);
       \draw[color=red][->]  (m-1-7) edge (m-1-8);
       \draw[color=red][->]  (m-1-6) edge (m-1-7);
       \draw[color=red][->]  (m-1-5) edge (m-1-6);
       \draw[color=red][->]  (m-1-4) edge (m-1-5);
       \draw[color=red][->]  (m-1-3) edge (m-1-4);
       \draw[color=red][->]  (m-1-2) edge (m-1-3);
       \draw[color=red][->]  (m-1-1) edge (m-1-2);
       \draw[color=red][->]  (m-8-1) edge[out=170,in=350] node[yshift=0.3ex] { } (m-7-2);
       \draw[color=red][->]  (m-7-1) edge[out=170,in=350] node[yshift=0.3ex] { } (m-6-3);
       \draw[color=red][->]  (m-6-1) edge[out=170,in=350] node[yshift=0.3ex] { } (m-5-4);
       \draw[color=red][->]  (m-5-1) edge[out=170,in=350] node[yshift=0.3ex] { } (m-4-5);
       \draw[color=red][->]  (m-4-1) edge[out=170,in=350] node[yshift=0.3ex] { } (m-3-6);
       \draw[color=red][->]  (m-3-1) edge[out=170,in=350,dotted] node[yshift=0.3ex] { } (m-2-7);
       \draw[color=red][->]  (m-2-1) edge[out=170,in=350,dotted] node[yshift=0.3ex] { } (m-1-8);
\end{tikzpicture}
    \caption{Order used to construct the partition of $J_v$.}
    \label{figure4}
\end{figure}

\noindent
We define $\pi_0 \subset \mathcal{P}(J_v)$, a subset of the power set of $J_v$, as $$\pi_0:=\big\{ \{(i,j)\} \big| \, \s(a_{v(i,j)})=1=(-1)^{i+j},\  1\leq i\leq j\leq n \big\}.$$

\noindent
Let $N\ge0$ be the number of pairs $(i,j)\in J_v$ with $\s(a_{v(i,j)})=-1$. If $N=0$, then $\pi_0$ is already a good partition. If $N>0$, for $1\leq k \leq N$ we will add inductively to $\pi_{k-1}$ the $k$-th $(i,j)$ such that $a_{v(i,j)}$ has negative product sign.  Thus (some element of) $\pi_k$  contains the first $k$ pairs $(i,j)$ in the order $\prec$. We will show that $\pi_N$ is the partition of $J_v$ claimed in lemma \ref{partition}. 

If $(i,j)$ is the $k$-th element of $J_v$ for which $s(a_{v(i,j)})=-1$ we will choose one of two operations to apply to $\pi_{k-1}$ to produce $\pi_k$. 
\begin{enumerate}
    \item[ ] Operation 1. If $\{(i',j')\} \in \pi_{k-1}$ with $i'=i$ and $i \leq j' < j$, or $j'=j$ and $i < i' \leq j$, where $a_{v(i',j')}$ has positive product sign, then
$$
\pi_k:=\Big(\pi_{k-1}-\big\{\{(i',j')\}\big\}\Big)\cup\Big\{\{(i',j'),(i,j)\}\Big\}.
$$
    \item[ ] Operation 2. If  $\{(i,l),(r,l)\}\in \pi_{k-1}$ and $\{(r,j)\} \in \pi_{k-1}$ with $i\leq l < j$ and $1 \leq r < i$, where $a_{v(i,l)}$ and $a_{v(r,j)}$ have positive product sign and $a_{v(r,l)}$ has negative product sign, then 
$$
\pi_k:=\Big(\pi_{k-1}-\big\{\{(i,l),(r,l)\},\{(r,j)\}\big\}\Big)\cup\Big\{\{(i,l),(r,l),(i,j),(r,j)\}\Big\}.
$$
\end{enumerate}
Thus operation 1 removes a singleton from $\pi_{k-1}$ and inserts a doubleton containing this singleton. Operation 2 removes a doubleton and a singleton  from $\pi_{k-1}$ and inserts a quadrupleton containing the removed elements and forming the vertices of a rectangle. 
It is useful to visualize the effect of the operations as in Figure \ref{figure5}, where the  subindex gives the sign of the corresponding product.

\begin{figure}[h]
    \centering
     \begin{tikzpicture}
    \matrix [matrix of math nodes] (m)
        {
            (i,j)_{-} &\hdots &(i',j')_{+}  \\
            \vdots& & \\
            (i',j')_{+}& & \\
        };  
\end{tikzpicture}    \qquad \qquad \qquad
     \begin{tikzpicture}
    \matrix [matrix of math nodes] (m)
        {
            (r,j)_{+} &\hdots &(i,j)_{-}  \\
            \vdots&  &\vdots \\
            (r,l)_{-}& \hdots &(i,l)_{+} \\
        };  
\end{tikzpicture}    \caption{Operations 1 and 2.}
    \label{figure5}
\end{figure}

\begin{remark}\label{FollowTheOrder}
After either operation the new set $\pi_k$ only contains sets $p$ that correspond to one of the cases in definition \ref{defpartition} and $\sum_{p \in \pi_k} |p|=1+\sum_{p \in \pi_{k-1}} |p|$. Furthermore,  all $(l,t)\in J_v $ with $(l,t)\prec(i,j)$ are already in $\pi_{k-1}$ (regardless of the product sign of $a_{v(l,t)}$), but $(l,t)$ is not contained in $\pi_{k-1}$ if $(i,j) \preceq (l,t)$ and $s(a_{v(l,t)})=-1$.
\end{remark}

For simplicity we are going to say that a non-canonical pair $(i,j)$ is positive (respectively negative) if the corresponding term $a_{v(i,j)}$ has positive (respectively negative) product sign. If we already know the product sign of a pair we will add it as a sub-index. The following definition will also be useful.

\begin{definition}\label{cases}
Let $0 \leq k \leq N$. If $(i,j)_+ \in J_v$ is positive, then we will say that
\begin{enumerate}
    \item $(i,j)_+$ is in a singleton if $\{(i,j)_+\} \in \pi_k$. We call it a $\sing$-configuration.

    \item $(i,j)_+$ is in an h-doubleton if $\{(i,j)_+,(i-l,j)_-\} \in \pi_k$, $i-l < i$. We call it a  $\hdoub$-configuration.

    \item $(i,j)_+$ is in a v-doubleton if $\{(i,j)_+,(i,j+l)_-\} \in \pi_k$, $j<j+l$. We call it a  $\vdoub$-configuration.

    \item $(i,j)_+$ is in an i-quadrupleton if $\{(i,j)_+,(i-l_1,j)_-,(i,j+l_2)_-,(i-l_1,j+l_2)_+\} \in \pi_k$, $l_1,l_2 \geq 1$. We call it a  $\iquad$-configuration.

    \item $(i,j)_+$ is in a t-quadrupleton if $\{(i,j)_+,(i+l_1,j)_-,(i,j-l_2)_-,(i+l_1,j-l_2)_+\} \in \pi_k$, $l_1,l_2 \geq 1$ and $i+l_1\leq j-l_2$.  We call it a $\tquad$-configuration.

\end{enumerate}

If $(i,j)_-$ is negative and is contained in some subset of $\pi_k$, then we will say that

\begin{enumerate}
    \item $(i,j)_-$ is in an h-doubleton if $\{(i,j)_-,(i+l,j)_+\} \subset p \in \pi_k$, $i<i+l\leq j$.  We call it a $\nhdoub$-configuration.
    \item $(i,j)_-$ is in a v-doubleton if $\{(i,j)_-,(i,j-l)_+\} \subset p \in \pi_k$, $i\leq j-l< j$. We call it a $\nvdoub$-configuration.
\end{enumerate}

Finally, given $(i,j)_-$ and $(i,l)_+$, $i\leq l <j$, in $\hdoub$-configuration with $(i',l)_-$, $1 \leq i'\leq l$. We say that we can apply operation 2 when the pair $(i',j)_+$ is in $\sing$-configuration.
\end{definition}

Now we show the method used to obtain $\pi_{k}$ from $\pi_{k-1}$ for each $k$, $1 \leq k \leq N$. Suppose $(i,j)_-$ is the $k$-th negative pair and fix the horizontal list  $$l_{(i,j)}:=\{(i,j),(i+1,j),\dots,(j,j)\}\cap J_v,$$
we do one of the following
\begin{itemize}
    \item \textbf{Case 1.} If there is some positive pair in the list $l_{(i,j)}$ in a $\sing$-configuration then we will use operation 1 with the maximal (with the order $\prec$) positive pair $(i+l,j)_+$, $i<i+l\leq j$, contained in the list $l_{(i,j)}$ that is in a $\sing$-configuration. 
    \item \textbf{Case 2.} If $(i,j)_-$ is the minimal negative pair on the list $l_{(i,j)}$ for the which we couldn't apply Case 1. We will prove (Lemma \ref{step2}) that there is a unique positive pair in the vertical list $s_{(i,j)}:=\{(i,i),\dots,(i,j-1),(i,j)\}\cap J_v$ that is contained in a $\sing$-configuration or in a $\hdoub$-configuration such that we can apply operation 2. In the first instance we use operation $1$ while in the second instance we use operation 2. 
    \item \textbf{Case 3.} If we couldn't apply Case 1 nor Case 2, we consider the minimal negative pair $(i_1,j)_-$ on the list $l_{(i,j)}$ for the which we couldn't apply Case 1. From Case 2, $(i_1,j)_-$ is contained in a $\nvdoub$-configuration with some positive pair $(i_1,j_1)_+$. We will prove (Lemma \ref{step3}) that the positive pair $(i,j_1)_+$ is contained in a $\sing$-configuration or in a $\hdoub$-configuration such that we can apply operation 2. In the first instance we use operation $1$ while in the second instance we use operation 2. 
\end{itemize}

Proving lemmas \ref{step2} and \ref{step3} will conclude our proof of lemma \ref{partition}. 

\begin{remark}\label{notpossible}
Let $0 \leq k \leq N$, suppose that we are able to construct $\pi_k$ following the method above. The following five configurations are impossible in $\pi_k$.
\begin{enumerate}
    \item $(i,j)_-$ is in a $\nhdoub$-configuration with $(i+l,j)_+$, $(i',j)_-$ is in a $\nhdoub$-configuration with $(i'+l',j)_+$ and $i < i' <i+l < i'+l'$. 
    \item $(i,j)_-$ is in a $\nhdoub$-configuration with $(i+l,j)_+$, $(i',j)_-$ is in a $\nvdoub$-configuration with $(i',j-l')_+$ and $i < i' <i+l$.
    \item  $(i,j)_-$ is not in a $\nhdoub$-configuration, $(i',j)_+$ is in a $\vdoub$-configuration with $(i',j+l)_-$ and $i<i'$.
    \item $(i,j)_-$ is in a $\nvdoub$-configuration with $(i,j-l_1)_+$, $(i',j)_-$ is in a $\nvdoub$-configuration with $(i',j-l_2)_+$ $i<i'$ and $l_1\neq l_2$.
    \item $(i,j)_-$ is in a $\nvdoub$-configuration with $(i,j-l_1)_+$, $(i',j)_-$ is in a $\nvdoub$-configuration with $(i',j-l_2)_+$,  $i<i'$ and $(i,j)_-$ is the minimal negative pair on the list $l_{(i,j)}$ for the which we couldn't apply Case 1.
\end{enumerate}
\end{remark}
The first three items are consequences of Case 1 of the construction given above, item \textit{(4)} is a consequence of Case 3 and item  \textit{(5)} is a consequence of Case 2.  See Figure \ref{badconf}.

\begin{figure}[h]
    \centering
    \begin{subfigure}[t]{0.45\textwidth}
    \begin{tikzpicture}
    \matrix [matrix of math nodes] (m)
        {
            (i,j)_{-} &\hdots &(i',j)_{-} & \hdots & (i+l,j)_{+} & \hdots & (i'+l',j)_{+}  \\
            & & & & & &  \\
            & & & & & &\\
        };  
\end{tikzpicture} \caption{Impossible configuration \textit{(1)}.}   
    \label{badconf1}
    \end{subfigure}
    \hfill \hfill
    \begin{subfigure}[t]{0.45\textwidth}
        \centering
    \begin{tikzpicture}
    \matrix [matrix of math nodes] (m)
        {
            (i,j)_{-} & \hdots &(i',j)_{-} & \hdots & (i+l,j)_{+}  \\
            &  & \vdots & &  \\
            &  & (i',j-l')_{+} & & \\
        };  
\end{tikzpicture}    \caption{Impossible configuration \textit{(2)}.}
    \label{badconf2}
    \end{subfigure}
    \vspace{0.2cm}

    \centering
    \begin{subfigure}[t]{0.45\textwidth}
        \centering
     \begin{tikzpicture}
    \matrix [matrix of math nodes] (m)
        {
            &  &(i',j+l)_{-}\\
            &  & \vdots  \\
            (i,j)_-& \hdots  & (i',j)_{+}\\
        };  
\end{tikzpicture}    \caption{Impossible configuration \textit{(3)}.}
    \label{badconf3}
    \end{subfigure}
    \hfill
   \begin{subfigure}[t]{0.45\textwidth}
    \centering
    \begin{tikzpicture}
    \matrix [matrix of math nodes] (m)
        {
           (i,j)_- & \hdots & (i',j)_-\\
           \vdots & & \vdots\\
           (i,j-l_1)_+ &  & (i',j-l_2)_+\\
        };  
\end{tikzpicture} \caption{Impossible configurations \textit{(4)} and \textit{(5)}.}   
    \label{badconf4}
    \end{subfigure}
    
    \caption{Configurations from remark \ref{notpossible}.}
        \label{badconf}
\end{figure}

In fact our construction makes a variety of configurations impossible.

\begin{lemma}\label{morethan}
Let $0 \leq k \leq N$,
suppose that we are able to construct $\pi_k$ following the method above and consider a horizontal list  $L:=\{(i,j),\dots,(i+l',j)\} \cap J_v$.
\begin{enumerate}
    \item If $L$ contains strictly more positive pairs than negative pairs, then there exist some positive pair $(i+l,j)_+ \in L$ contained in a partition $p$ such that $p \cap L= \{(i+l,j)_+\}$.
    \item If $L$ contains strictly more negative pairs than positive pairs and every negative pair in $L$ is contained in a partition, then there exist some negative pair $(i+l,j)_- \in L$ contained in a partition $p$ such that $p \cap L= \{(i+l,j)_-\}$, in particular $(i+l,j)_-$ must be in a $\nvdoub$-configuration.
\end{enumerate}
We have analogues statements for vertical lists.
\end{lemma}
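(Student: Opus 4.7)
The plan is to prove both parts by a common double-counting injection argument, and then to derive the $\nvdoub$ refinement in part (2) by ruling out the alternative configurations. The key observation, read straight off Definition~\ref{cases}, is that for any partition element $p\in\pi_k$, its intersection with a single row $j$ contains at most one positive and at most one negative pair: the only configurations that place two pairs on one row are the h-doubleton, the i-quadrupleton, and the t-quadrupleton, and each places exactly one positive and one negative on every row it meets. Hence for a pair $(a,j)\in L$ one has $|p\cap L|\ge 2$ precisely when $p$ contains a unique opposite-sign partner on row $j$ that also lies in $L$.

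From this, both existence claims follow cleanly. Write $n_+=|L_+|$ and $n_-=|L_-|$. In part (1), let $P\subseteq L_+$ collect the positives with $|p\cap L|\ge 2$, and send each such positive to its line-$j$ negative partner. Distinct positives lie in distinct partition elements and each partition element has at most one line-$j$ negative, so the map injects into $L_-$, giving $|P|\le n_-<n_+$ and producing a positive in $L_+\setminus P$ satisfying $p\cap L=\{(a,j)_+\}$. In part (2), the symmetric injection $Q\hookrightarrow L_+$ on the set $Q\subseteq L_-$ of negatives with $|p\cap L|\ge 2$ gives $|Q|\le n_+<n_-$ and hence a negative in $L$ with $|p\cap L|=1$.

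For the $\nvdoub$ refinement, I would enumerate the six possible configurations in which a negative can appear: three are $\nvdoub$-configurations (a simple v-doubleton; the i-quadrupleton negative sharing a column with that quadrupleton's positive $(i,j)_+$; the t-quadrupleton negative sharing a row with the t-quadrupleton's positive $(i,j)_+$), and three are strictly $\nhdoub$-configurations (simple h-doubleton; i-quadrupleton top-negative; t-quadrupleton bottom-negative). If the negative produced by part (2) failed to be $\nvdoub$, it would sit in an $\nhdoub$-configuration whose row-$j$ partner $(b,j)_+$ lay strictly to the right of $L$. Pick the maximal such column $a^*$ with partner $b^*>i+l'$. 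By Remark~\ref{notpossible}(2), no negative of $L$ strictly between columns $a^*$ and $b^*$ is in $\nvdoub$-configuration, and by Remark~\ref{notpossible}(1) every $\nhdoub$-partner of such a negative must nest inside $(a^*,b^*)$; maximality of $a^*$ then forces these partners to land inside $L$. The resulting bijection on $L\cap(a^*,i+l']$ excises that block while preserving the strict inequality $n_->n_+$ on the shorter sublist $L':=L\cap[i,a^*]$, and induction on the length $l'$ of $L$ supplies a $\nvdoub$-negative inside $L'\subseteq L$, contradicting the assumption. The main obstacle is exactly this inductive reduction: one must verify that the strict count inequality and the hypothesis ``every negative is contained in a partition'' both transfer cleanly to $L'$ using only the non-crossing consequences of Remark~\ref{notpossible}. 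The vertical case is formally dual and requires no new ideas.
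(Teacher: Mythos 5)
Your proof of parts (1) and (2) is correct and is essentially the paper's argument: the paper observes that any $p\in\pi_k$ meets row $j$ in at most one positive and one negative, then runs the same disjointness/injection count (the paper phrases it as a proof by contradiction, you phrase it directly, but these are the same argument). So far so good.

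The substantive divergence is the ``in particular'' clause of part (2), which the paper dismisses with ``the proof of the second statement is analogous'' and never actually justifies. You are right to flag this as a gap, but your attempt to close it has problems. First, a bookkeeping slip: a quadrupleton contains exactly two negatives, the lower-left corner (which is in \nhdoub-configuration, its row partner being the lower-right positive to its right) and the upper-right corner (which is in \nvdoub-configuration, its column partner being the lower-right positive below it). Your list of ``six configurations'' double-counts because the i-quadrupleton and t-quadrupleton are the \emph{same} set viewed from different positive corners; in particular your ``i-quadrupleton top-negative'' is the same element you already classified, correctly, as \nvdoub\ a line earlier, and is not \nhdoub. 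Second, and more seriously, the inductive reduction you sketch does not terminate: if the rightmost $\nhdoub$-negative with escaping partner sits at the right end of $L$ (so $a^*=i+l'$), the ``excised block'' $L\cap(a^*,i+l']$ is empty, $L'=L$, and the induction makes no progress. This is not a fixable detail, because the clause as written is simply false for an arbitrary horizontal segment: a segment whose only non-canonical entry is a single $\nhdoub$-negative whose h-doubleton partner lies just beyond the right end of $L$ satisfies all the hypotheses yet that negative is not \nvdoub. What rescues the paper is that it only ever invokes the \nvdoub\ conclusion for the list $A_2=\{(i+1,j),\dots,(j,j)\}\cap J_v$, which extends all the way to the diagonal entry $(j,j)$; for such a list an $\nhdoub$-partner $(b,j)_+$ with $b\le j$ cannot escape to the right, so $p\cap L$ being a singleton forces \nvdoub\ immediately, with no induction. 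Your write-up should restrict the ``in particular'' claim (and its vertical analogue, which needs the list to start at $(i,i)$) to lists reaching the diagonal; once that hypothesis is added, the refinement is a one-line observation rather than a multi-step induction.
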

\begin{proof}
Suppose that for every positive pair $(i+l,j)_+ \in L$ there exist a partition $p$ such that $p \cap L \neq \{(i+l,j)_+\}$. By inspecting the different cases of definition \ref{cases} we conclude that there exist some $b_l$ such that $p \cap L = \{(i+l,j)_+, (i+b_l,j)_-\}$. Since the partitions are disjoint we are able to find a negative pair for each positive pair, which contradicts the hypothesis. The proof of the second statement is analogous. 
\end{proof}

\begin{lemma}\label{configuration}
Let $0 \leq k \leq N$,
suppose that we are able to construct $\pi_k$ following the method above. If there is some non canonical negative pair $(i,j)_-$ not in a $\nhdoub$-configuration, every pair $(a,b) \prec (i,j)_-$ is contained in a partition of $\pi_k$ and there is a negative pair $(i,j-l_2)$ in a $\nhdoub$-configuration with the positive pair $(i+l_1,j-l_2)_+$, then in the list $L:=\{(i,j)_-,(i+1,j),\dots,(i+l_1,j)_+\}\cap J_v$ there is at least one positive pair $(i+l,j)_+$ in a $\sing$-configuration or in a $\tquad$-configuration. 
Moreover, if $(i+l,j)_+$ is in a $\tquad$-configuration with $(i+l+l_1',j)_-$, $(i+l,j-l_2')_-$ and $(i+l+l_1',j-l_2')_+$, then $i+l_1<i+l+l_1'$ (See figure \ref{figure13}).
\end{lemma}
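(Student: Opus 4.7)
My plan begins by establishing basic properties of $L$. Applying Remark \ref{signs} to the rectangle with corners $(i,j),(i+l_1,j),(i,j-l_2),(i+l_1,j-l_2)$: three of these are non canonical by hypothesis, so the fourth is as well, and the sign identity $\s(a_{v(i,j)})\s(a_{v(i+l_1,j-l_2)})=\s(a_{v(i+l_1,j)})\s(a_{v(i,j-l_2)})$ forces $\s(a_{v(i+l_1,j)})=+1$, so $(i+l_1,j)_+\in L$. Moreover, since $(i+l,j)\prec(i,j)$ for every $l\ge 1$, every non canonical pair of $L$ except $(i,j)_-$ is already in some partition of $\pi_k$ by hypothesis.

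Next I would narrow the possible configurations of any positive $(i+l,j)_+\in L$ with $l\ge 1$. The $\vdoub$ case is ruled out by Remark \ref{notpossible}(3), which applies thanks to the hypothesis that $(i,j)_-$ is not in $\nhdoub$. The $\iquad$ case is also impossible, because its defining quadrupleton would contain a negative partner $(i+l,j+l_b)_-$ in row $j+l_b>j$ which is $\succ(i,j)$ and has not yet been processed. Hence every positive of $L$ lies in $\sing$, $\hdoub$, or $\tquad$, and the existence claim reduces to excluding the case where all such positives are in $\hdoub$.

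For existence I would focus on $(i+l_1,j)_+$. If it is $\sing$ or $\tquad$ we are done; otherwise it is in $\hdoub$ with some partner $(i+l_1-h,j)_-$, $h\ge 1$, where $h\ne l_1$ since $(i,j)_-$ itself is unprocessed. If $h>l_1$ the resulting $\nhdoub$ extends strictly left of $L$ and spans all of $L$; Remark \ref{notpossible}(2) then forbids any interior negative of $L$ from being in $\nvdoub$, so every such negative is in $\nhdoub$, and Remark \ref{notpossible}(1) forces the corresponding $\nhdoub$'s to be nested strictly inside the big one. A careful count via Lemma \ref{morethan} applied to an appropriate subsegment of $L$ then produces a positive whose partition intersects the subsegment only at itself; by the classification above this positive must be in $\sing$, giving the desired conclusion. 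If $1\le h<l_1$ the partner $(i+l_1-h,j)_-$ lies inside $L$, and I iterate the argument on the shorter sublist $\{(i,j),\dots,(i+l_1-h,j)\}\cap J_v$. Since the length strictly decreases, the descent terminates with a positive of $L$ in $\sing$ or $\tquad$.

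For the moreover clause, suppose $(i+l,j)_+$ is in $\tquad$ with right arm $(i+l+l'_1,j)_-$. The case $l+l'_1=l_1$ is immediate: the pair $(i+l+l'_1,j)$ would be simultaneously positive (by the first paragraph) and negative (as the right arm of the $\tquad$), a contradiction. In the case $l+l'_1<l_1$ the negative $(i+l+l'_1,j)_-$ lies strictly inside $L$ in a $\nvdoub$ configuration coming from the $\tquad$. I would trace the step $k_0$ at which $(i+l+l'_1,j)_-$ was processed via Operation 2, and use the preservation through step $k$ of the row-$(j-l_2)$ $\hdoub$ $\{(i,j-l_2)_-,(i+l_1,j-l_2)_+\}$ together with the Operation 2 selection rule to exhibit a second $\nvdoub$ negative in row $j$ at column $i+l_1$ with a different vertical depth, violating Remark \ref{notpossible}(4). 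The main obstacle throughout is this history tracking: aligning the state of row $j$ at intermediate steps with the structure forced by the preserved row-$(j-l_2)$ $\hdoub$ and by the $\prec$ order of processing is the most delicate part of the argument.
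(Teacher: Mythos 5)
There is a genuine gap. The heart of the paper's proof is a single clean application of Lemma~\ref{morethan}: it uses Remark~\ref{signs} to transport the sign/non-canonicity pattern from row $j$ to row $j-l_2$, shows via Remark~\ref{notpossible}(1)--(2) that each negative of $L_{sub}$ injects into the positives of $L_{sub}$ through its own $\nhdoub$, deduces that $L'=L\setminus\{(i,j)_-\}$ has strictly more positives than negatives, and then invokes Lemma~\ref{morethan}(1) to obtain one positive $(i+l,j)_+\in L'$ whose \emph{entire} partition class $p$ meets $L'$ only in $\{(i+l,j)_+\}$. That single fact simultaneously kills the $\hdoub$, $\vdoub$, and $\iquad$ possibilities (any partner in row $j$ or above $(i+l,j)_+$ would either land in $L'$ or produce configurations (2)/(3) of Remark~\ref{notpossible}) \emph{and} gives the ``moreover'' bound for free: if $(i+l,j)_+$ is in a $\tquad$ then $(i+l+l_1',j)_-\in p$, so it must lie outside $L'$, i.e.\ $i+l+l_1'>i+l_1$. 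Your proposal never reaches this ``$p\cap L'$ is a singleton'' conclusion.

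Concretely, two parts of your argument do not close. First, the existence step: your descent on $h$ when $1\le h<l_1$ replaces $L$ by the sublist ending at $(i+l_1-h,j)_-$, but the lemma's standing hypothesis -- a $\nhdoub$ at row $j-l_2$ whose positive endpoint sits at the right end of the sublist -- is not re-established. The $\nhdoub$ partner of $(i+l_1-h,j-l_2)_-$ is only constrained by Remark~\ref{notpossible}(1) to lie at or left of column $i+l_1$; nothing forces it into $\{i,\dots,i+l_1-h\}$, so the recursion's hypotheses are not preserved. Moreover, even if the descent terminates, the positive it produces need not satisfy ``$p\cap L'=\{\cdot\}$'', which you would still need for the final clause. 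Second, and more seriously, you do not prove the ``moreover'' clause. Your case $l+l_1'=l_1$ is fine, but for $l+l_1'<l_1$ you only sketch a history-tracking argument and you explicitly flag it as the ``most delicate part''; that sketch is not a proof, and as written it even refers to ``a second $\nvdoub$ negative in row $j$ at column $i+l_1$'' although $(i+l_1,j)$ is positive. The paper avoids all of this history tracking precisely because Lemma~\ref{morethan} hands it the right positive pair together with the disjointness property that immediately yields $i+l+l_1'>i+l_1$.
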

\begin{proof}
We will prove that in the list $L_{sub}:=\{(i,j-l_2)_-,(i+1,j-l_2),\dots,(i+l_1,j-l_2)_+\}\cap J_v$ there are more positive pairs than negative pairs. Suppose that there is some negative pair $(i+l',j-l_2)_-$ in $L_{sub}$, $0\leq l'<l_1$,  then by remark \ref{notpossible} \textit{(1)} and \textit{(2)} it must be in a $\nhdoub$-configuration with some $(i+l'',j-l_2)_+$ such that $i+l''\leq i+l_1$. So for each negative pair in $L_{sub}$ there is a unique corresponding positive pair in $L_{sub}$.

Remark \ref{signs} implies that $s(a_{v(i+r,j)})=s(a_{v(i+r,j-l_2)})$ for $0 \leq r\leq l_1$, thus there are strictly more positive elements than negative elements in the list $L':=L-\{(i,j_-)\}$. By lemma \ref{morethan} there must be at least one positive pair $(i+l,j)_+ \in L$ contained in a partition $p \in \pi_k$ without a negative pair in the list $L'$, \ie $p\cap L'=\{(i+l,j)_+\}$.  We conclude inspecting the possible configurations for $(i+l,j)_+$ of definition \ref{cases}, $\hdoub$-configuration, $\vdoub$-configuration and $\iquad$-configuration form impossible configurations \textit{(2)} and \textit{(3)} of remark \ref{notpossible} and if $(i+l,j)_+$ is in a $\tquad$-configuration with $(i+l+l_1',j)_-$, $(i+l,j-l_2')_-$ and $(i+l+l_1',j-l_2')_+$, then $(i+l+l_1',j)_- \not \in L'$, which is equivalent to the last statement of the lemma. 
\end{proof}

\begin{figure}[h!] 
    \centering
    \begin{tikzpicture}
    \matrix [matrix of math nodes] (m)
        {
(i,j)_{-} & \hdots & (i+l,j) _{+}& \hdots & (i+l_1,j)_{+} & \hdots & (i+l+l_1',j)_{-}\\
 \vdots & &  & &\vdots & & \\
(i,j-l_2)_{-}& & \hdots & &(i+l_1,j-l_2)_{+} & & \\
        };  
\end{tikzpicture}
    \caption{Lemma \ref{configuration} when \ensuremath{(i+l,j)_+} is in a \protect\tquad-configuration.}
    \label{figure13}
\end{figure}

In the rest of the paper we let $1 \leq k \leq N$, suppose that we are able to construct $\pi_{k-1}$ following the method above and we fix $(i,j)_-$ the $k$-th negative pair.

\begin{lemma}\label{step2}
Suppose $(i,j)_-$ is the minimal negative pair on the list $l_{(i,j)}$ for the which we couldn't apply Case 1, then in the vertical list $s_{(i,j)}$ there is a unique positive pair $(i,j')_+$ for the which we can apply operation 1 or operation 2. Specifically we will prove that $(i,j')_{+}$ is in a $\sing$-configuration (So we can apply operation 1) or in a $\hdoub$-configuration with $(i',j')_-$ and $(i',j)_+$ is in a $\sing$-configuration (So we can apply operation 2). 
\end{lemma}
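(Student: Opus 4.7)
The plan is to apply Lemma \ref{Important} to the restricted vector $w := (x_i, x_{i+1}, \ldots, x_j)$ to relate positive and negative non-canonical counts on the border of the sub-triangle of $w$ (which corresponds exactly to $s_{(i,j)} \cup l_{(i,j)}$ in the original indexing), and then use the vertical analog of Lemma \ref{morethan} to locate a positive pair in $s_{(i,j)}$ that is isolated in that vertical list inside its partition of $\pi_{k-1}$. First I would verify the hypotheses of Lemma \ref{Important} for $w$: since $(i,j)_-$ is non-canonical negative, $i+j$ is odd, so $w$ has even length $j-i+1$, and $\prod_{k=i}^{j} \sign(x_k) = -1$, so $w$ has an odd number of negative entries. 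Under the identification $a_{w(i',j')} = a_{v(i+i'-1,\,i+j'-1)}$, the identity $b_1(w) = b_{-1}(w)$ rewrites as
\[
P_s + P_l = N_s + N_l + 1,
\]
where $P_s,N_s$ count positive and negative non-canonical pairs in $s_{(i,j)} \setminus \{(i,j)\}$, while $P_l,N_l$ do the same for $l_{(i,j)} \setminus \{(i,j)\}$, and the $+1$ accounts for $(i,j)_-$ itself.

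Second, I would establish $P_l = N_l$ using the Case 2 setup. Because $(i,j)_-$ is the $\prec$-minimum negative in $l_{(i,j)}$ at which Case 1 fails, every other negative in that row sits in a $\nhdoub$-configuration. A positive in $l_{(i,j)} \setminus \{(i,j)\}$ cannot be in $\sing$ (the Case 1 failure hypothesis forbids singletons to the right of $(i,j)$), and it cannot be in $\vdoub$, $\iquad$, or $\tquad$ in either the standing or opposite-corner role, because each of those configurations would force either a partner pair in a row with index strictly larger than $j$ (not yet processed) or a row-$j$ negative to occupy a configuration other than $\nhdoub$. Thus each such positive is in $\hdoub$ paired with a negative in the same row, so $P_l = N_l$ and $P_s = N_s + 1$. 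Applying the vertical analog of Lemma \ref{morethan}(1) to $s_{(i,j)}$ then produces a positive pair $(i, j')_+ \in s_{(i,j)}$ whose partition $p$ satisfies $p \cap s_{(i,j)} = \{(i, j')_+\}$; a case inspection of Definition \ref{cases} forces $(i, j')_+$ to be in $\sing$ or $\hdoub$. In the $\sing$ case operation 1 applies. In the $\hdoub$ case with partner $(i-l, j')_-$, Remark \ref{signs} applied to the rectangle with corners $(i-l, j')$, $(i, j')$, $(i-l, j)$, $(i, j)$ forces $(i-l, j) \in J_v$ with sign $+$, and $(i-l, j)_+$ is still a singleton because any earlier consumption would have required a previously processed $(i'', j)_-$ with $i'' > i$ to have invoked operation 2; but by the Case 2 setup every such negative was handled by Case 1, and operations 1 and 2 triggered at earlier rows cannot touch a pair in row $j$. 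Hence operation 2 applies.

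The main obstacle is the uniqueness claim. A matching argument within the partitions shows that the number of positives in $s_{(i,j)}$ sitting in $\sing$- or $\hdoub$-configuration equals one plus the number of negatives in $s_{(i,j)} \setminus \{(i,j)\}$ sitting in $\nhdoub$-configuration, so uniqueness reduces to showing that no such negative is in $\nhdoub$. I would argue by contradiction: a hypothetical $(i, j'')_-$ in $\nhdoub$ with $(i+l, j'')_+$ produces, via the rectangle identity, a non-canonical positive $(i+l, j)_+$ in $l_{(i,j)}$ that by the Case 2 hypothesis must be in $\hdoub$ with some $(i'', j)_-$ for $i'' \in (i, i+l)$; a second rectangle argument forces $(i'', j'')$ to be a non-canonical negative, and tracing back to the processing of $(i, j'')_-$ — which by Case 1 greedily paired it with the $\prec$-maximum singleton positive in $l_{(i, j'')}$, namely $(i+l, j'')_+$ — together with Remark \ref{notpossible}(1) applied to the resulting overlapping $\nhdoub$-pairs in row $j''$ yields the desired contradiction.
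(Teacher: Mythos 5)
Your existence argument tracks the paper's quite closely. You apply Lemma \ref{Important} to the restricted vector to get $P_s+P_l=N_s+N_l+1$, and you correctly identify that the positive in $s_{(i,j)}$ isolated by (the vertical analog of) Lemma \ref{morethan} must sit in a $\sing$- or $\hdoub$-configuration, and that the putative operation-2 partner $(i-l,j)_+$ must still be a singleton. Your route to $P_l=N_l$ (classify all positives in $l_{(i,j)}\setminus\{(i,j)\}$ as necessarily $\hdoub$, pairing each with a distinct same-row negative) is a mild and valid variant of the paper's, which instead rules out a surplus of negatives on $l_{(i,j)}$ via Remark \ref{notpossible}~\textit{(5)}. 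Both give the same count and both feed into the same use of Lemma \ref{morethan}.

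The gap is in uniqueness. After reducing to ``no negative $(i,j'')_-\in s_{(i,j)}\setminus\{(i,j)\}$ is in $\nhdoub$,'' you produce, from a hypothetical $(i,j'')_-\ \nhdoub\ (i+l,j'')_+$, the positive $(i+l,j)_+$, its $\hdoub$-partner $(i'',j)_-$ with $i<i''<i+l$, and the non-canonical negative $(i'',j'')_-$; you then claim Remark \ref{notpossible}~\textit{(1)} finishes. But Remark \ref{notpossible}~\textit{(1)} only forbids \emph{interleaving} $\nhdoub$-pairs, i.e.\ $i<i''<i+l<i''+m$. If $(i'',j'')_-$ is $\nhdoub$ with $(i''+m,j'')_+$ and $i''+m<i+l$ (a \emph{nested} pair), no impossible configuration is triggered, and nothing in your argument excludes it. One must either iterate your rectangle step — producing $(i''+m,j)_+$, then $(i''',j)_-$, then $(i''',j'')_-$, with a strictly shrinking interval, until the descent terminates in a contradiction — or do what the paper does: invoke Lemma \ref{configuration}, whose proof turns exactly this nesting structure into a count showing row $j''$ has at least as many positives as negatives over $[i,i+l]$, hence row $j$ has strictly more positives than negatives on $L'$, forcing a $\sing$ or $\tquad$ positive in $l_{(i,j)}$ and thus a contradiction via Case 1 failure or Remark \ref{notpossible}~\textit{(5)}. (A smaller quibble: your matching count ``$\#\sing+\#\hdoub = 1 + \#\nhdoub$'' omits a $\#\tquad$ correction term; the reduction survives because $\#\tquad\le\#\nhdoub$, but the stated equality is not quite right.)
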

\begin{proof}
\textbf{Existence.}
\noindent
We will use lemma \ref{Important}.  Since $s(a_{v(i,j)})=-1$, there is an odd number of negative $x_r$'s with $i\le r\le j$. As we are also assuming that $a_{v(i,j)}$ is non canonical,  $j-i $ is odd. Thus 
the hypotheses of lemma \ref{Important} with the vector $v'=(x_i,x_{i+1},\dots,x_{j})$ are satisfied.\footnote{\ To  apply  lemma \ref{Important} to $v'$, note that    $a_{v(l,t)}=a_{v'(l-i+1,t-i+1)}\ (i\le l \le t \le j)$, so that $(l,t)$ is canonical with respect to $v$ if and only if $(l-i+1,t-i+1)$ is canonical with respect to $v'$.}  Hence,  one of the following holds.

\vskip.2cm
\noindent $\bullet$ Subcase 1. List $A_1:=s_{(i,j)_-}-\{(i,j)\}=\{(i,i),(i,i+1),\dots,(i,j-1) \}\cap J_v$ has strictly more positive pairs than negative pairs. 

\vskip.1cm

\noindent $\bullet$ Subcase 2. List $A_2:=l_{(i,j)_-}-\{(i,j)\}=\{(i+1,j),\dots,(j-1,j),(j,j) \}\cap J_v$ has strictly more positive pairs than negative pairs. 
\vskip.2cm

Subcase 2 is impossible, if $A_2$ has strictly more positive pairs than negative pairs then by lemma \ref{morethan} there must be some positive pair in the list $l_{(i,j)}$ in $\sing$-configuration or in $\vdoub$-configuration. The first possibility is a contradiction with the fact that we couldn't apply Case 1 of the construction and the second possibility is impossible since we would obtain impossible configuration \textit{(4)} of remark \ref{notpossible}. 

So $A_1$ has strictly more positive pairs than negative pairs, by lemma \ref{morethan} it must have a positive pair $(i,j')_+$ in $\sing$-configuration or in $\hdoub$-configuration with $(i',j')_-$, for $i'<i$. If $(i,j')_+$ is in  $\sing$-configuration we can apply operation 1. If $(i,j')_+$ is in $\hdoub$-configuration we have to prove that $(i',j)_+$ is in $\sing$-configuration so that we can apply operation 2. Inspecting the different configurations for the positive pair $(i',j)_+$, by remark \ref{FollowTheOrder}, it can't be in a $\vdoub$-configuration, $\hdoub$-configuration or $\iquad$-configuration. Suppose it is in a $\tquad$-configuration (See figure \ref{existencefigure}) with some pairs $(i'',j)_{-}$, $(i'',j'')_{+}$ and $(i',j'')_{-}$, then by remark \ref{FollowTheOrder} $i<i''$. Thus $(i'',j)_{-}$ is in a $\nvdoub$-configuration with $(i'',j'')_{+}$. Since $i<i''$ we would obtain impossible configuration \textit{(5)} of remark \ref{notpossible}. We conclude that $(i',j)_+$ must be in $\sing$-configuration and we can apply operation 2.

\begin{figure}[h!]
\centering
\begin{minipage}{.5\textwidth}
  \centering
    \centering
    \begin{tikzpicture}
    \matrix [matrix of math nodes] (m)
        {
            (i',j)_+&\hdots &(i,j)_- &\hdots &(i'',j)_-\\
            \vdots& &\vdots & &\\
            (i',j')_-&\hdots &(i,j')_+ & &\\
        };  
\end{tikzpicture}
    \caption{If \ensuremath{(i',j)_+} is in \protect\tquad-configuration.}
    \label{existencefigure}
\end{minipage}%
\begin{minipage}{.5\textwidth}
  \centering
    \begin{tikzpicture}
    \matrix [matrix of math nodes] (m)
        {
            (i,j)_- &\hdots &(i+l,j)_+\\
            \vdots & &\vdots\\
            (i,j')_- &\hdots &(i+l,j')_+\\
        };  
\end{tikzpicture}
    \caption{If $(i,j')_+$ is not unique.}
    \label{uniquenessfigure}
\end{minipage}
\end{figure}

\textbf{Uniqueness.}
\noindent
We already proved that in $A_2$ there are more negative pairs than positive pairs. However, if there are strictly more negative pairs than positive pairs, then by lemma \ref{morethan} we can find some negative pair in $A_2$ in a $\nvdoub$-configuration and would obtain impossible configuration \textit{(5)} of remark \ref{notpossible}. So in the list $A_2$ there is the same number of positive and negative pairs. Now lemma $\ref{Important}$ guarantees that $s_{(i,j)}$ has exactly the same number of positive and negative pairs.

Suppose there are two positive pairs in $\sing$-configuration or in $\hdoub$-configuration in the list $A_1$. If every negative pair in the list $A_1$ is in $\nvdoub$-configuration, since the partitions are disjoint, we would obtain strictly more positive pairs than negative pairs in $s_{(i,j)}$. So there is at least one negative pair $(i,j')_{-}$ in $\nhdoub$-configuration with $(i+l,j')_{+}$ in the list $A_1$ (See figure \ref{uniquenessfigure}). Applying lemma \ref{configuration} to $(i,j)_-$ and $(i,j')_-$ in $\nhdoub$-configuration with $(i+l,j')_+$ we obtain a positive pair in the list $l_{(i,j)}$ in a $\sing$-configuration or in a $\tquad$-configuration. The first possibility is a contradiction with the fact the we couldn't apply Case 1 of the construction and the second possibility is impossible since we would obtain impossible configuration \textit{(5)} of remark \ref{notpossible}.\end{proof}

\begin{lemma}\label{step3}
Suppose $(i,j)_-$ is not the minimal negative pair on the list $l_{(i,j)}$ for the which we couldn't apply Case 1, that $(i_1,j)_-$ is the minimal negative pair on the list $l_{(i,j)}$ for the which we couldn't apply Case 1 and that $(i_1,j)_-$ is contained in a $\nvdoub$-configuration with some positive pair $(i_1,j_1)_{+}$, then it is possible to apply operation 1 or operation 2 with the positive pair $(i,j_1)_{+}$. Specifically we will prove that $(i,j_1)_{+}$ is in a $\sing$-configuration (So we can apply operation 1) or in a $\hdoub$-configuration with $(i-l,j_1)_-$ and $(i-l,j)_+$ is in a $\sing$-configuration (So we can apply operation 2). 
\end{lemma}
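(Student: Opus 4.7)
The plan is to analyze the configuration of $(i,j_1)_+$ in $\pi_{k-1}$ via a case split paralleling the proof of Lemma \ref{step2}. I would first verify that $(i,j_1)_+$ is a non-canonical positive pair by applying remark \ref{signs} to the rectangle with corners $(i,j_1), (i_1,j_1), (i,j), (i_1,j)$: three corners are non-canonical by hypothesis, so $(i,j_1)$ is non-canonical, and the product-sign relation forces $\s(a_{v(i,j_1)})=+1$. Since $(i,j_1) \prec (i,j)$, remark \ref{FollowTheOrder} places $(i,j_1)_+$ in some partition of $\pi_{k-1}$; by definition \ref{cases} its configuration is one of $\sing$, $\hdoub$, $\vdoub$, $\iquad$, or $\tquad$. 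The $\sing$ case gives operation 1 directly. In the $\hdoub$ case with partner $(i-l,j_1)_-$, remark \ref{signs} applied to the rectangle $(i-l,j_1), (i,j_1), (i-l,j), (i,j)$ shows $(i-l,j)_+$ is non-canonical positive, and since $(i-l,j) \succ (i,j)$ the pair has not been touched during processing and remains a singleton, so operation 2 applies.

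The core of the proof is ruling out $\vdoub$, $\iquad$, and $\tquad$. For $\vdoub$ and $\iquad$, the partition of $(i,j_1)_+$ contains some $(i,J)_-$ with $j_1 < J < j$, and remark \ref{signs} on the rectangle $(i,j_1), (i_1,j_1), (i,J), (i_1,J)$ produces a non-canonical negative $(i_1,J)_-$. If $(i_1,J)_-$ is in $\nhdoub$-configuration with some $(i_1+L',J)_+$, I apply Lemma \ref{configuration} to $(i_1,j)_-$ (which is in $\nvdoub$ rather than $\nhdoub$) using this $\nhdoub$-partner; since Case 1 failed for $(i,j)_-$, no positive in $\{(i_1,j),\dots,(i_1+L',j)\} \cap J_v \subseteq l_{(i,j)}$ is in $\sing$-configuration, so Lemma \ref{configuration} produces one in $\tquad$ whose $\nvdoub$-partner lies in row $j$ at first index strictly greater than $i_1$, and impossible configuration $(5)$ of remark \ref{notpossible} applied to this new $\nvdoub$-pair and $(i_1,j)_-$ (which is $\prec$-minimum in $l_{(i_1,j)}$ for Case 1 failure) gives the desired contradiction. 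If instead $(i_1,J)_-$ is in $\nvdoub$-configuration with some $(i_1,J-L)_+$, then impossible $(4)$ of remark \ref{notpossible} rules out $L \ne J-j_1$, while $L = J-j_1$ would force the three distinct pairs $(i_1,j_1), (i_1,J), (i_1,j)$ of column $i_1$ to share a partition with $(i_1,j_1)_+$, exceeding the maximum partition size of $4$ (which admits at most two elements from a single column).

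The $\tquad$ case is where the argument is most delicate and will be the main obstacle. Here the partition of $(i,j_1)_+$ contains $(i,j_1-l_2)_-$ in $\nhdoub$-configuration with $(i+l_1,j_1-l_2)_+$, so Lemma \ref{configuration} applied to the current negative $(i,j)_-$ yields a positive in $\{(i,j),\dots,(i+l_1,j)\} \cap J_v$ in $\tquad$ with $\nvdoub$-partner $(i+l+l_1',j)_-$ satisfying $i+l+l_1' > i+l_1$. If $i+l+l_1' > i_1$, impossible $(5)$ finishes as before; otherwise $i+l_1 < i_1$, and I split on $i+l+l_1' = i_1$ versus $i+l+l_1' < i_1$. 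The equality case forces the $\tquad$-partition to coincide with the $\iquad$ or $\vdoub$ partition of $(i_1,j_1)_+$, with $l$ strictly less than $l_1$ (else the pair $(i+l_1,j_1)_-$ would lie in two disjoint partitions); impossible $(2)$ of remark \ref{notpossible} on row $j_1$ with the $\nhdoub$-configuration of $(i+l,j_1)_-$ and the $\nvdoub$-configuration of $(i+l_1,j_1)_-$ then gives a contradiction. The strict inequality case compares offsets of the two $\nvdoub$-configurations in row $j$: impossible $(4)$ eliminates mismatched offsets, and when offsets match, the $\tquad$ structure places $(i+l,j_1)_-$ in $\nhdoub$-configuration with $(i+l+l_1',j_1)_+$, yielding a contradiction again via impossible $(2)$ on row $j_1$.
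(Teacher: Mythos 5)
Your overall plan — split on the $\sing$, $\hdoub$, $\vdoub$, $\iquad$, $\tquad$ configurations of $(i,j_1)_+$ — is the same as the paper's, and your preliminary sign computation and the $\sing$, $\vdoub$/$\iquad$ cases are sound. Your $\tquad$ case is a valid but genuinely different route from the paper's: the paper applies Lemma~\ref{configuration} in the column $i_1$ (after first establishing $i_1<i'$ and that $(i_1,j')_-$ is in $\nhdoub$-configuration), whereas you apply Lemma~\ref{configuration} in the column $i$ directly and then do a three-way split on the location of the resulting $\tquad$'s $\nvdoub$-partner relative to $i_1$, closing each branch with a different impossible configuration. That is more work but it does go through.

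The genuine gap is in the $\hdoub$ case. You assert that since $(i-l,j)\succ(i,j)$, the pair $(i-l,j)_+$ ``has not been touched during processing and remains a singleton.'' This inference is false: Remark~\ref{FollowTheOrder} only says that \emph{negative} pairs succeeding $(i,j)$ are not yet assigned, not that positive ones are singletons. Operation~2, applied at an earlier negative $(a,j)_-$ in the same row with $a>i$, takes a positive singleton $\{(r,j)\}$ with $r<a$ and merges it into a quadrupleton $\{(a,l'),(r,l'),(a,j),(r,j)\}$; from the point of view of $(r,j)_+$ this is a $\tquad$-configuration. Nothing prevents $r=i-l$ here, so $(i-l,j)_+$ may very well already sit in a $\tquad$ in $\pi_{k-1}$. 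What the $\prec$-order argument \emph{does} rule out for $(i-l,j)_+$ are the $\vdoub$, $\hdoub$, and $\iquad$ configurations (all of which would force a negative pair succeeding $(i,j)$ to already be placed). The remaining $\tquad$ possibility has to be excluded by a separate argument: if $(i-l,j)_+$ is in a $\tquad$ with $(i',j)_-$, $(i-l,j')_-$, $(i',j')_+$, then Remark~\ref{notpossible}~(4), applied to the two $\nvdoub$-pairs $(i',j)_-$ and $(i_1,j)_-$ in row $j$, forces $j'=j_1$, and then $(i-l,j_1)_-$ would lie both in that $\tquad$ and in the $\hdoub$-partition of $(i,j_1)_+$ — a contradiction. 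Without this step your proof of the $\hdoub$ case does not establish that $(i-l,j)_+$ is a singleton, which is exactly what Operation~2 needs.
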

\begin{proof}
We will study separately each possible configuration for the pair $(i,j_1)_{+}$.

\begin{figure}[h]
    \begin{subfigure}[t]{\textwidth}
    \centering
    \begin{tikzpicture}
    \matrix [matrix of math nodes] (m)
        {
            (i-l,j)_{+} &\hdots &(i,j)_{-} &\hdots &(i',j)_{-}  \\
            \vdots & & \vdots & & \vdots \\
            (i-l,j_1)_{-} &\hdots &(i,j_1)_{+} &\hdots &(i',j')_{+} \\    
        };  
\end{tikzpicture}
    \caption{If $(i,j_1)_{+}$ is in a $\hdoub$-configuration.}
    \label{AlmostLast}
    \end{subfigure}
    \vspace{0.2cm}

    \centering
    \begin{subfigure}[t]{0.45\textwidth}
        \centering
    \begin{tikzpicture}
    \matrix [matrix of math nodes] (m)
        {
            (i,j)_{-} &\hdots &(i_1,j)_{-} &\hdots &(i',j)_{+}  \\
            \vdots & & \vdots & & \vdots \\
            (i,j')_{-} &\hdots &(i_1,j')_{-} &\hdots &(i',j')_{+} \\    
             \vdots& &\vdots & &  \\      
            (i,j_1)_{+}& \hdots &(i_1,j_1)_{+} & & \\ 
        };  
\end{tikzpicture}
    \caption{If $(i,j_1)_+$ is in a $\vdoub$-configuration or in a $\iquad$-configuration.}
    \label{figure7}
    \end{subfigure}
    \hfill
    \begin{subfigure}[t]{0.45\textwidth}
        \centering
    \begin{tikzpicture}
    \matrix [matrix of math nodes] (m)
        {
            (i,j)_{-} &\hdots &(i_1,j)_{-} &\hdots &(i',j)_{+}  \\
            \vdots & & \vdots & & \vdots \\
            (i,j_1)_{+} &\hdots &(i_1,j_1)_{+} &\hdots &(i',j_1)_{-} \\    
             \vdots& &\vdots & & \vdots \\      
            (i,j')_{-}& \hdots &(i_1,j')_{-} & \hdots &(i',j')_+ \\ 
        };  
\end{tikzpicture}
    \caption{If $(i,j_1)_{+}$ is in a $\tquad$-configuration.}
    \label{LastOne}
    \end{subfigure}
    \caption{Figures of lemma \ref{step3}.}
        \label{THELASTF}
\end{figure}

\begin{itemize}
    \item If $(i,j_1)_{+}$ is in a $\sing$-configuration we can directly apply operation 1.
    
    \item If $(i,j_1)_{+}$ is in a $\hdoub$-configuration with $(i-l,j_1)_-$ we have to prove that $(i-l,j)_+$ is a $\sing$-configuration, so we inspect the different configurations for the pair $(i-l,j)_+$ (See figure \ref{AlmostLast}). By remark \ref{FollowTheOrder} it can't be in a $\vdoub$-configuration, $\hdoub$-configuration or $\iquad$-configuration. Suppose it is in a $\tquad$-configuration with some pairs $(i',j)_-$, $(i-l,j')_-$ and $(i',j')_+$, then to avoid  impossible configuration \textit{(4)} of remark \ref{notpossible} necessarily $j'=j_1$. So the pair $(i-l,j_1)_-$ would be contained in two different partitions, which is impossible. 

    \item If $(i,j_1)_{+}$ is in a $\vdoub$-configuration or in a $\iquad$-configuration (See figure \ref{figure7}), then there is some $(i,j')_-$ in a $\nvdoub$-configuration with $(i,j_1)_{+}$ and remark \ref{signs} implies that the pair $(i_1,j')_-$ is negative.
    
    If $(i_1,j')_-$ was in $\nvdoub$-configuration with $(i_1,j'')_{+}$, since $(i,j')_-$ in a $\nvdoub$ configuration with $(i,j_1)_{+}$, to avoid impossible configuration \textit{(4)} of remark \ref{notpossible} necessarily $j''=j_1$ and so  the pair $(i_1,j_1)_-$ would be contained in two different partitions, which is impossible. Thus $(i_1,j')_-$ is in a $\nhdoub$-configuration with some pair $(i',j')_+$
    
    We finally apply lemma \ref{configuration} to the negative pair $(i_1,j)_-$ and to $(i_1,j')_-$ in a $\nhdoub$-configuration with $(i',j')_+$. We conclude that there must be some positive pair $(i''.j)_+$ with $i_1<i''$ in a $\sing$-configuration or in a $\tquad$-configuration, however both possibilities are impossible. The first possibility is a contradiction with the fact the we couldn't apply Case 1 of the construction and the second possibility is impossible since we would obtain impossible configuration \textit{(5)} of remark \ref{notpossible}.

    \item If $(i,j_1)_{+}$ is in a $\tquad$-configuration with $(i',j_1)_-$, $(i,j')_-$ and $(i',j')_+$ (See figure \ref{LastOne}). Remark \ref{notpossible} \textit{(2)} implies $i_1<i'$. By remark $\ref{signs}$ the pair $(i_1,j')_-$ is negative and by remark \ref{notpossible} \textit{(2)} the pair $(i_1,j')_-$ must be in $\nhdoub$-configuration with some positive pair $(i'',j')_+$. We apply lemma \ref{configuration} to $(i_1,j)_-$ and the pair $(i_1,j')_-$ in $\nhdoub$-configuration with $(i'',j')_+$ to obtain a positive pair $(i''',j)_+$ with $i_1<i'''$ in a $\sing$-configuration or in a $\tquad$-configuration, however both possibilities are impossible. The first possibility is a contradiction with the fact the we couldn't apply Case 1 of the construction and the second possibility is impossible since we would obtain impossible configuration \textit{(5)} of remark \ref{notpossible}.
\end{itemize}

\end{proof}

\section{Acknowledgment}

I am grateful to my master advisor Eduardo Friedman for proposing me this problem and the very valuable discussions and corrections throughout all this work.

\end{document}